\theoremstyle{remark}
\newtheorem*{theorem*}{Theorem}
\newtheorem{theorem}{Theorem}[section]
\newtheorem{lemma}[theorem]{Lemma}
\newtheorem{proposition}[theorem]{Proposition}
\theoremstyle{remark}
\newtheorem{definition}[theorem]{Definition}
\newtheorem{remark}[theorem]{Remark}
\newtheorem{example}[theorem]{Example}
\title{Crossingless sheaves and their classes in equivariant K-theory}
\author[G.~Dobrovolska]{Galyna Dobrovolska}
\address{Department of Mathematics, Ariel University, Ariel 40700 Israel}
\email{galdobr@gmail.com}
\begin{document}

\maketitle

\begin{abstract} We introduce crossingless sheaves in certain equivariant derived categories which are analogous to the Bezrukavnikov-Mirkovic exotic sheaves for two-block nilpotents. We calculate the classes of crossingless sheaves in equivariant K-theory of Cautis-Kamnitzer varieties.
\end{abstract}

\section{Introduction}

The exotic t-structure was defined by Bezrukavnikov and Mirkovic in \cite{bm} in order to study modular representation theory of Lie algebras. The exotic t-structure is defined using an action of the affine braid group on derived categories of varieties related to the Springer resolution defined by Bezrukavnikov and Riche in \cite{br}. The irreducible objects in the heart of an exotic t-structure are called simple exotic sheaves.


More precisely, let $\bf k$ be an algebraically closed field of characteristic $p \gg 0$ and let $\mathfrak g$ be a reductive Lie algebra defined over $\bf k$. Let $\lambda$ be a regular integral weight and let $e$ be a nilpotent. Let ${\rm Mod}_e^{fg,\lambda}$ be the category of modules with genralized central character $(\lambda, e)$. Let $\mathcal B_{e, \bf k}$ be the Springer fiber corresponding to $e$. Then Theorem 5.3.1 of \cite{bmr} states that
$$D^b(Coh_{\mathcal B_{e, \bf k}}(\widetilde{g}_{\bf k})) \simeq D^b({\rm Mod}_e^{fg,\lambda}).$$
It is also known that the tautological t-structure on the derived category of modules corresponds to the exotic t-structure on the derived category of coherent sheaves.

Exotic sheaves for two-block nilpotents were studied by Anno and Nandakumar in \cite{an}. Let $\mathcal{D}_n^{\prime}$ be the derived category of coherent sheaves on a two-block Springer fibre (where the nilpotent is in type A and has two blocks of sizes $m+n$ and $n$). Anno and Nandakumar defined the objects $\Psi^{\prime}_{\alpha} \in D_n^{\prime}$ for any $(m,n)$-crossingless matching $\alpha$ and proved that these objects are the simple exotic sheaves for two-block nilpotents. Cautis-Kamnitzer varieties $Y_k$ which contain Springer fibers and Slodowy slices were introduced in \cite{ck} (see Definition \ref{defofY_k}). The equivariant K-theory of $Y_k$ is a free $\mathbb{C}[q, q^{-1}]$-module on $2^k$ generators (See Proposition \ref{K_0ofY_k}). Classes of $\Psi^{\prime}_{\alpha}$ in K-theory of Cautis-Kamnitzer varieties were computed in \cite{dny} and subsequently applied to the calculation of dimensions of modular representations of $\mathfrak{sl}_n$ with a two-block central character.

Cautis and Kamnitzer studied categorification of Reshetikhin-Turaev invariants of linear tangles in \cite{ck}. In the course of their study they defined functors $\Psi(\alpha)$ for linear tangles $\alpha$. We complete their representation to a representation of affine tangles so that we have functors $\Psi(\alpha)$ for any affine tangle $\alpha$. Using these functors, we define an analogous concept to Anno-Nandakumar's exotic sheaves for a two-block nilpotent in certain equivariant derived category. We call the objects we defined {\it crossingless sheaves} (see Definition \ref{crossingless}). We compute the classes of crossingless sheaves in equivaraint K-theory of Cautis-Kamnitzer varieties (see Theorem \ref{maintheorem}, which is our main theorem).

This paper introduces techniques which can be used for defining representations of affine tangles on equivariant derived category of Cautis-Kamnitzer varieties with respect to the ${\mathbb C}^*$ in the centralizer of the two-block nilpotent and calculating characters of modular representations with central character equal to a two-block nilpotent (note that dimension formulas for these modular representations have been computed in \cite{dny}). These goals will be achieved in subsequent publications by the author.

\subsection*{Acknowledgements} I thank Vinoth Nandakumar for sharing his ideas and for many useful conversations. I also thank Joel Kamnitzer for useful correspondence and Roman Travkin for helpful conversations.

\section{Preliminaries} \label{prelim}

\subsection{Affine tangles} The combinatorics of affine tangles will be necessary for what follows; we recall the definitions here (see Section 3 of \cite{an} for more details). 

\begin{definition} If $q \equiv r \pmod 2$, a $(q,r)$ affine tangle is an embedding (up to isotopy) of $\frac{q+r}{2}$ arcs and a finite number of circles into the region $\{ (x, y) \in \mathbb{C} \times \mathbb{R} | 1 \leq |x| \leq 2 \}$, such that the end-points of the arcs are $(1,0), (\zeta_q, 0), \cdots , (\zeta_q^{q-1}, 0), (2, 0), (2\zeta_r,0), \cdots, (2 \zeta_r^{r-1},0)$ in some order; here $\zeta_k = e^{\frac{2\pi i}{k}}$. \end{definition}

Figure \ref{composition} illustrates the composition of affine tangles.

\begin{figure} 
\centering
\captionsetup{justification=centering}
\includegraphics[scale=1]{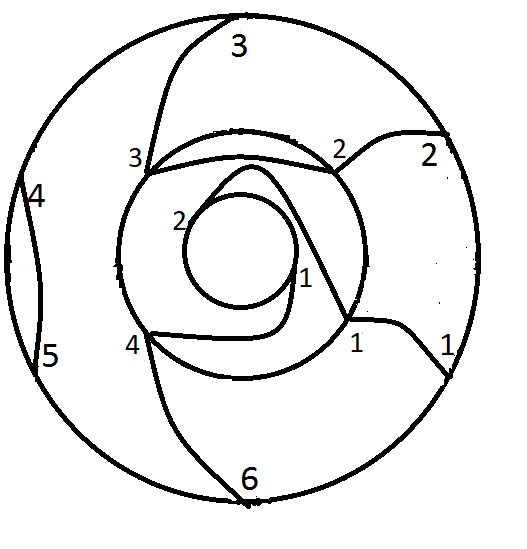}
\caption{}
\label{composition}
\end{figure}

\begin{definition} If $q \equiv r \pmod 2$, a framed $(q,r)$ affine tangle is an embedding (up to isotopy) of $\frac{q+r}{2}$ ``rectangular arcs" and a finite number of circles into the region $\{ (x, y) \in \mathbb{C} \times \mathbb{R} | 1 \leq |x| \leq 2 \}$. Here a ``rectangular arc" is an injective map from $[0,1] \times [0,1]$ to $\{ (x, y) \in \mathbb{C} \times \mathbb{R} | 1 \leq |x| \leq 2 \}$ where the segments $[0,1] \times 0$ and $[0,1] \times 1$ are mapped to the segments $[x] \times [0,1]$ where $x = \zeta_p^i$ or $x = 2 \zeta_q^j$. 
 \end{definition} 

\begin{remark} Framed tangles are necessary for the validity of the Reidemeister I move in the functorial tangle categorification statement from Proposition \ref{tanglecat}. Below we will often abbreviate ``$(q,r)$-affine tangle'' to ``$(q,r)$-tangle''. In \cite{an}, we used $\; \widehat{} \;$ notation to denote framed tangles; we omit that here. Note that given a $(q,r)$ framed affine tangle $\alpha$, and a $(r,s)$ framed affine tangle $\beta$, we can compose them using scaling and concatenation to obtain a $(q,s)$ affine tangle $\beta \circ \alpha$. This composition is associative up to isotopy. \end{remark}

\begin{definition} Given $1 \leq i \leq n$, define the following affine tangles: \begin{itemize} \item Let $g_n^i$ denote the $(n-2, n)$ tangle with an arc connecting $(2\zeta_n^i,0)$ to $(2\zeta_n^{i+1},0)$.
\item Let $f_n^i$ denote the $(n, n-2)$ tangle with an arc connecting $(\zeta_{n}^i,0)$ and $(\zeta_{n}^{i+1},0)$. 
\item Let $t_n^i(1)$ (respectively, $t^i_n(2)$) denote the $(n,n)$ tangle in which a strand connecting $(\zeta_n^{i},0)$ to $(2\zeta_{n}^{i+1},0)$ passes above (respectively, beneath) a strand connecting $(\zeta_n^{i+1},0)$ to $(2\zeta_n^{i},0)$. 
\item Let $r_n$ denote the $(n,n)$ tangle connecting $(\zeta_n^j, 0)$ to $(2\zeta_n^{j-1}, 0)$ for each $1 \leq j \leq n$ (clockwise rotation of all strands). 
\item Let $s_n^{i}$ denote the $(n,n)$-tangle with a strand connecting $(\zeta_j,0)$ to $(2 \zeta_j,0)$ for each $j$, and a strand connecting $(\zeta_i,0)$ to $(2 \zeta_i,0)$ passing clockwise around the circle, beneath all the other strands.
\end{itemize} 
\end{definition}

We also introduce new generators $w_n^i(1)$ and $w_n^i(2)$, which correspond to positive and negative twists of framing of the $i$-th strand of an $(n, n)$ identity tangle.

The below figures depict some of these elementary tangles. Here the label $i$ in the inner (resp. outer) circle denotes the point $(\zeta^i, 0)$ (resp. $(2 \zeta^i,0)$). 

\includegraphics[scale=0.65]{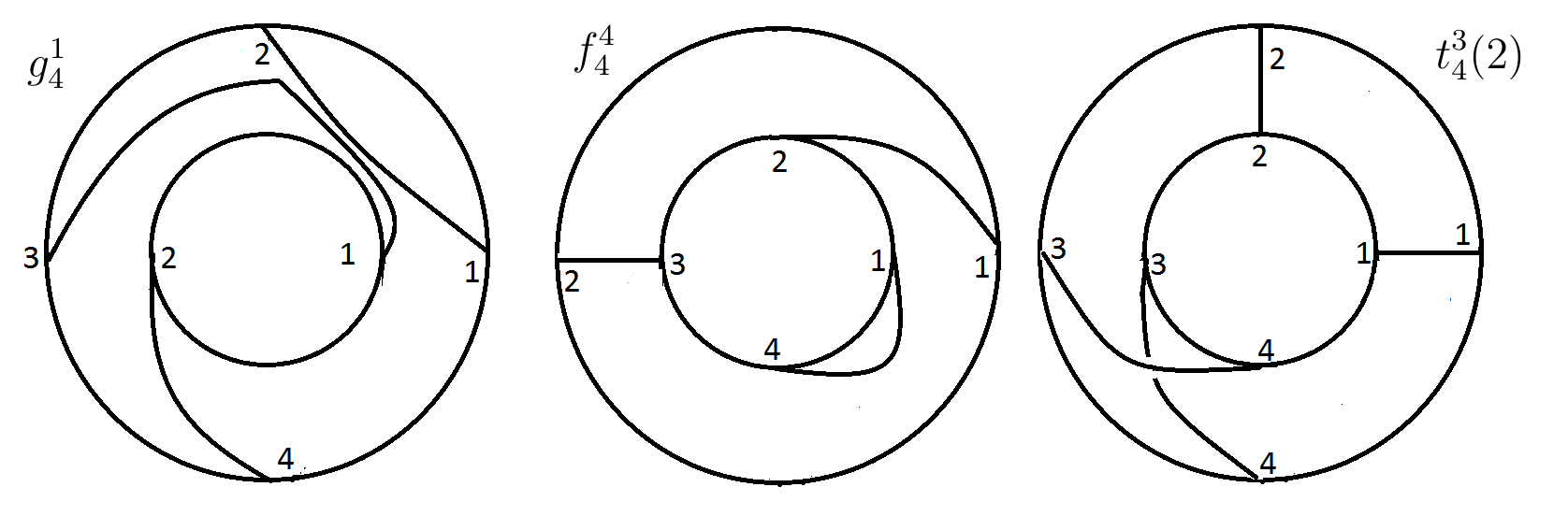}

\includegraphics[scale=0.65]{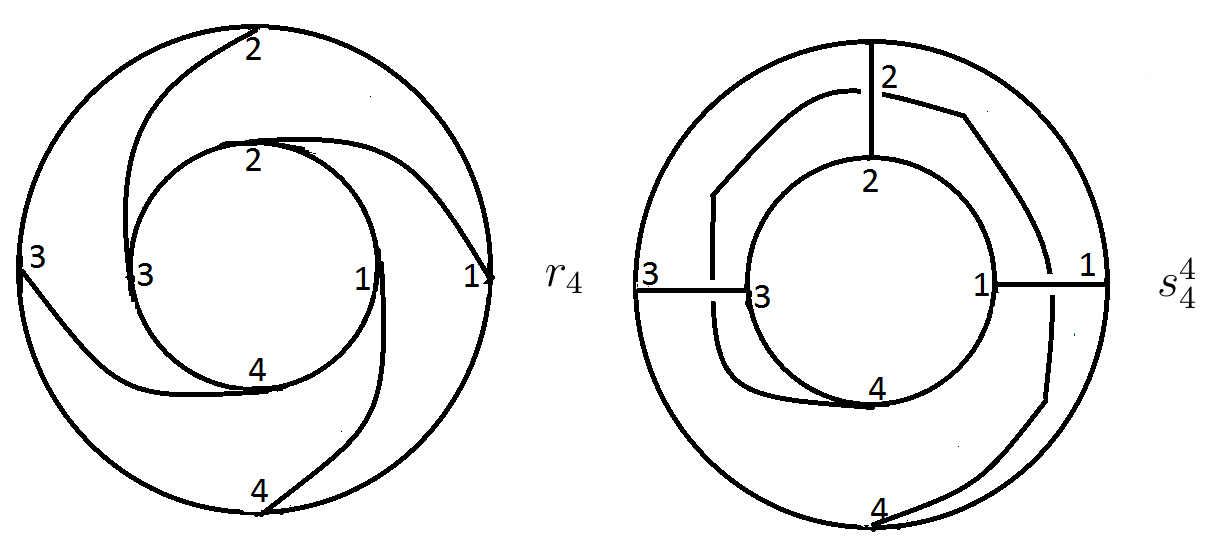}

These are called ``elementary'' tangles because any framed tangle can be expressed as (or rather, is isotopic to) a product of the tangles $g_n^i, , f_n^i, t_n^i(1), t_n^i(2), w_n^i(1), w_n^i(2)$ and $r_n$. Alternatively, this statement is true if we replace $r_n$ by $s_n^n$ (to see this, one uses the fact that $r_n = s_n^n \circ t_n^{n-1}(2) \circ \cdots \circ t_n^1(2)$). 

\begin{definition} A $(q,r)$-tangle is a``linear tangle'' if it can be expressed in terms of the generators $g_n^i, f_n^i, t_n^i(1), t_n^i(2), w_n^i(1), w_n^i(2)$ (i.e. without using $r_n$). \end{definition}

The above definition is motivated that the fact any such tangle can be represented with endpoints on a line segment (as opposed to an annulus); see Section 4 of \cite{ck} for a pictorial depiction.


Below is the list of relations which the generators $g_n^i, f_n^i, t_n^i(1), t_n^i(2), w_n^i(1), w_n^i(2), r_n$ satisfy. Note that $r^{\prime}_n$ is the inverse of $r_n$. 

\begin{enumerate} 
\item\label{AFTanMovesFirst} ${f}_n^i\circ {g}_n^{i+1} = id = {f}_n^{i+1} \circ {g}_n^i$ 
\item\label{AFTanReidemeister1} (Reidemeister 1) ${f}_n^i\circ {t}_n^{i\pm 1}(l)\circ {g}_n^i = {w}_n^i(l)$ 
\item\label{AFTanMovesThird} ${t}_n^i(2)\circ {t}_n^i(1) = id = {t}_n^i(1)\circ {t}_n^i(2)$ 
\item\label{AFTanReidemeister3} ${t}_n^i(l)\circ {t}_n^{i+1}(l)\circ {t}_n^i(l) =  {t}_n^{i+1}(l)\circ {t}_n^{i}(l)\circ {t}_n^{i+1}(l)$ 
\item ${g}_{n+2}^{i+k}\circ {g}_n^i = {g}_{n+2}^i\circ {g}_n^{i+k-2}$ 
\item ${f}_{n}^{i+k-2}\circ {f}_{n+2}^i = {f}_{n}^i\circ {f}_{n+2}^{i+k}$ 
\item ${g}_n^{i+k-2}\circ {f}_n^i = {f}_{n+2}^i\circ {g}_{n+2}^{i+k}, \quad {g}_n^{i}\circ {f}_n^{i+k-2} = {f}_{n+2}^{i+k}\circ {g}_{n+2}^{i}$
\item\label{AFTanPitchfork} ${g}_n^i\circ {t}_{n-2}^{i+k-2}(l) = {t}_n^{i+k}(l)\circ {g}_n^i, \quad {g}_n^{i+k}\circ {t}_{n-2}^{i}(l) = {t}_n^{i}(l)\circ {g}_n^{i+k}$ 
\item ${f}_n^i\circ {t}_{n}^{i+k}(l) = {t}_{n-2}^{i+k-2}(l)\circ {f}_n^i, \quad {f}_n^{i+k}\circ {t}_{n}^{i}(l) = {t}_{n-2}^{i}(l)\circ {f}_n^{i+k}$ 
\item ${t}_n^i(l)\circ {t}_n^{i+k}(m) =  {t}_n^{i+k}(m)\circ {t}_n^i(l)$ \item\label{TanMovesLast}  ${t}_n^i(1)\circ {g}_n^{i+1} = {t}_n^{i+1}(2)\circ {g}_n^i,\quad {t}_n^i(2)\circ {g}_n^{i+1}= {t}_n^{i+1}(1)\circ {g}_n^i$ 
\item ${r}_n\circ {r}_n'  = id = {r}_n' \circ {r}_n$ 
\item ${r}_{n-2}'\circ {f}_n^i \circ {r}_n  = {f}_n^{i+1},\ i=1,\ldots, n-2; \quad
 {f}_n^{n-1} \circ ({r}_n)^2  = {f}_n^1$ 
\item ${r}_n' \circ {g}_n^i \circ {r}_{n-2}  = {g}_n^{i+1}, \ i=1,\ldots, n-2; \quad
 ({r}_n')^{2} \circ {g}_n^{n-1} = {g}_n^1$ 
\item  ${r}_n' \circ {t}_n^i(l)\circ {r}_n  = {t}_n^{i+1}(l); \quad
 ({r}_n')^{2}\circ {t}_n^{n-1}(l) \circ ({r}_n)^2  = {t}_n^1(l)$ 

We have the following additional relations for twists:

\item\label{AFTanMovesFirstTwist} ${w}_n^i(1) \circ {w}_n^i(2) = id, \quad {w}_n^i(l) \circ {w}_n^j(k) = {w}_n^j(k) \circ {w}_n^i(l),\ i\ne j$ 
\item $\hat{w}_n^i(k) \circ {g}_n^i = {w}_n^{i+1}(k) \circ {g}_n^i, \quad%
 {w}_n^i(k) \circ {g}_n^j =  {g}_n^j \circ {w}_n^{i+1\pm 1}(k),\ i\ne j, j+1$
\item ${f}_n^i \circ {w}_n^i(k) = {f}_n^i \circ {w}_n^{i+1}(k), \quad%
{w}_n^i(k) \circ {f}_n^j =  {f}_n^j \circ {w}_n^{i-1\pm 1}(k),\ i\ne j, j+1$
\item ${w}_n^i(k) \circ {t}_n^i(l) = {w}_n^{i+1}(k) \circ {t}_n^i(l), \quad%
 {w}_n^i(k) \circ {t}_n^j(l) =  {t}_n^j(l) \circ {w}_n^{i}(k),\ i\ne j, j+1$ 
\item \label{FTanMovesLast} ${t}_n^i \circ {w}_n^i(k) = {f}_n^i \circ {w}_n^{i+1}(k), \quad%
{w}_n^i(k) \circ {f}_n^j =  {t}_n^j \circ {w}_n^{i}(k),\ i\ne j, j+1$
\item\label{AFTanMovesLast} ${w}_n^i(k) \circ {r}_n = {r}_n \circ {w}_n^{i-1}(k), \quad%
 {w}_n^i(k) \circ {r}'_n = {r}'_n \circ {w}_n^{i+1}(k)$ \end{enumerate}

\subsection{Exotic t-structures for two-block nilpotents} \label{[AN]}

In this section we summarize the results of \cite{an} for motivational purposes. Let $\mathfrak{g}=\mathfrak{sl}_{m+2n}$, and $\mathcal{B}$ be the flag variety. Let $e \in \mathfrak{g}^*$ be a nilpotent linear functional with Jordan type $(m+n, n)$, and $\lambda \in \mathfrak{h}^* // W$ be a regular Harish-Chandra character. 

Let $S_{m,n} \subset \mathfrak{g}$ be the corresponding Mirkovic-Vybornov transverse slice (see \cite{mv} for a definition, and see below for an explicit matrix realization). Let $U_{m+n,n} \subset \tilde{\mathfrak{g}} = T^* \mathcal{B}$ be its preimage under the Grothendieck-Springer resolution map.  

\begin{align*} S_{m,n} &= \{ \left( \begin{array}{ccccccccc}
 & 1 &  & & &  &  &  &  \\
 &  & & \ddots &  &  &  &  &  \\
 &  &  & & 1 &  &  &  &  \\
a_1 & a_2 & & \cdots & a_{m+n} & b_1 & b_2 & \cdots  & b_{n} \\
 &  &  &  & &  & 1 &  &  \\
 &  &  &  & &  &  & \ddots &  \\
 &  &  &  & &  &  &  & 1 \\
c_1 & \cdots & c_n \; 0 & \cdots & 0 & d_1 & d_2 & \cdots & d_n
\end{array} \right) \} \end{align*}

\begin{definition} Let $\langle \cdot, \cdot \rangle$ denote the Killing form. 
\begin{align*} S^*_{m,n} &= \{ x \in \mathfrak{g} \; | \; \langle x, z - e \rangle = 0 \; \forall \; z \in S_{m,n} \} \\ \textbf{U}_{\hat{e}} \mathfrak{g} &= \textbf{U} \mathfrak{g} / \langle x^p - x^{[p]} - \langle e,x \rangle^p \; | \; x \in S^*_{m,n} \rangle \end{align*}

Denote by $\text{Mod}_{\hat{e}, \lambda}^{fg}(\textbf{U}\mathfrak{g})$ the subcategory of $\text{Mod}_{e, \lambda}^{fg}(\textbf{U}\mathfrak{g})$ consisting of those modules which are defined over the quotient $\textbf{U}_{\hat{e}} \mathfrak{g}$. \end{definition}

[BMR] localization theory gives the equivalence in the below Proposition; see Section 5.1 of \cite{an} for more details. Under this equivalence, the tautological t-structure on the LHS is mapped to the exotic t-structure on the RHS. We refer to the images of the irreducible objects in the LHS as simple exotic sheaves.

\begin{proposition} \label{cmn} $$\mathcal{C}_{m,n} := D^b \text{Mod}^{\text{fg}}_{\hat{e}, \lambda}(U\mathfrak{g}) \simeq D^b \text{Coh}_{\mathcal{B}_e}(U_e)$$ \end{proposition}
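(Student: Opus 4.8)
The plan is to derive the stated equivalence by base change from the full [BMR] localization equivalence recalled in the introduction, namely
$D^b\mathrm{Coh}_{\mathcal{B}_e}(\tilde{\mathfrak{g}}) \simeq D^b\mathrm{Mod}^{fg}_{e,\lambda}(U\mathfrak{g})$
(Theorem 5.3.1 of \cite{bmr}), restricted along the closed embedding of the Mirkovic--Vybornov slice $S_{m,n} \hookrightarrow \mathfrak{g}$. First I would recall that for $p \gg 0$ and $\lambda$ regular this equivalence is realized by a Fourier--Mukai-type kernel and, crucially, is linear over the Frobenius center $Z_{\mathrm{Fr}}(U\mathfrak{g}) \cong \mathcal{O}(\mathfrak{g}^{*(1)})$: on the coherent side $Z_{\mathrm{Fr}}$ acts through the composite $\tilde{\mathfrak{g}} \to \mathfrak{g}^* \xrightarrow{\mathrm{Fr}} \mathfrak{g}^{*(1)}$, while on the module side it records the $p$-character. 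The condition of having \emph{generalized} central character $(\lambda, e)$ then corresponds, on the geometric side, to working in the formal neighborhood of $e^{(1)}$, i.e.\ to the support condition along the Springer fiber $\mathcal{B}_e$.

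The second step is to match the algebraic and geometric restrictions. By definition, $U_{\hat{e}}\mathfrak{g}$ is the quotient of $U\mathfrak{g}$ by the central elements $x^p - x^{[p]} - \langle e, x \rangle^p$ for $x \in S^*_{m,n}$. These elements lie in $Z_{\mathrm{Fr}}$, and under $Z_{\mathrm{Fr}} \cong \mathcal{O}(\mathfrak{g}^{*(1)})$ the element attached to $x$ is the Frobenius twist of the linear function $\langle x, - \rangle$, set equal to the scalar $\langle e, x \rangle^{(1)}$. Since $S^*_{m,n}$ is by definition the annihilator of $S_{m,n} - e$ under the Killing form, the subspace $S_{m,n}$ is cut out inside $\mathfrak{g}^* \cong \mathfrak{g}$ precisely by the equations $\langle x, - \rangle = \langle e, x \rangle$, $x \in S^*_{m,n}$. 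Hence $\mathrm{Mod}^{fg}_{\hat{e},\lambda}(U\mathfrak{g})$ is exactly the subcategory of $\mathrm{Mod}^{fg}_{e,\lambda}(U\mathfrak{g})$ on which $Z_{\mathrm{Fr}}$ acts through the quotient $\mathcal{O}(\mathfrak{g}^{*(1)}) \twoheadrightarrow \mathcal{O}(S^{(1)}_{m,n})$, and correspondingly on the coherent side one base changes along $S^{(1)}_{m,n} \hookrightarrow \mathfrak{g}^{*(1)}$ to obtain sheaves on $U_e = \tilde{\mathfrak{g}} \times_{\mathfrak{g}^{*(1)}} S^{(1)}_{m,n}$ supported on $\mathcal{B}_e$; since Frobenius is a bijection on points, this $U_e$ is the Grothendieck--Springer preimage of the slice.

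The third step is to transport the equivalence across this base change. Because the localization equivalence is $\mathcal{O}(\mathfrak{g}^{*(1)})$-linear, restricting its kernel to the closed subscheme $S^{(1)}_{m,n}$ commutes with the functor on both sides, yielding $D^b\mathrm{Coh}_{\mathcal{B}_e}(U_e) \simeq D^b\mathrm{Mod}^{fg}_{\hat{e},\lambda}(U\mathfrak{g})$. For this I would invoke that $S_{m,n}$ is a transverse, and hence regularly embedded, affine slice meeting the nilpotent orbit transversally \cite{mv}, so that $U_e$ is a complete intersection in $\tilde{\mathfrak{g}}$ of the expected dimension, following Section 5.1 of \cite{an}.

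The main obstacle I anticipate is this last point: verifying that the derived base change is \emph{clean}, i.e.\ that the fiber product $\tilde{\mathfrak{g}} \times_{\mathfrak{g}^{*(1)}} S^{(1)}_{m,n}$ carries no higher Tor-terms and that the restriction preserves both the bounded derived-category structure and the support/completion along $\mathcal{B}_e$. This is precisely where the transversality of the Mirkovic--Vybornov slice is indispensable: it guarantees Tor-independence of the embedding $S^{(1)}_{m,n} \hookrightarrow \mathfrak{g}^{*(1)}$ against the Springer map, so that restriction is compatible with the t-structures and no derived corrections arise. I would therefore organize the argument so that the flatness of the localization kernel over $Z_{\mathrm{Fr}}$ reduces the entire verification to the classical transversality of the slice.
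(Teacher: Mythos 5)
Your outline matches the paper's own treatment: the paper gives no independent proof of this proposition, deferring entirely to \cite{bmr} localization theory and Section 5.1 of \cite{an}, and the argument there is precisely the one you sketch --- linearity of the localization equivalence over the Frobenius center, identification of the central reduction by $x^p - x^{[p]} - \langle e,x\rangle^p$ for $x \in S^*_{m,n}$ with restriction to the (Frobenius twist of the) affine slice $S_{m,n}$, and base change along that slice, with transversality of the Mirkovic--Vybornov slice supplying the Tor-independence needed for the restriction to be clean. No discrepancy to report.
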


\begin{proposition} \label{tanglefunctor} Let $\alpha$ be an $(m+2p, m+2q)$-tangle. Then we have a functor $\Psi^{\prime}(\alpha): \mathcal{C}_{m,p} \rightarrow \mathcal{C}_{m,q}$; this collection of functors satisfy the functorial tangle relations $\Psi^{\prime}(\alpha) \circ \Psi^{\prime}(\beta) \simeq \Psi^{\prime}(\alpha \circ \beta)$. \end{proposition}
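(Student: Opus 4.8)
The strategy is to define $\Psi^{\prime}$ first on the elementary tangles and then to verify that the resulting assignment respects the relations (1)--(21), so that it descends to a well-defined functor on all tangles. Since every affine tangle is isotopic to a composition of the generators $g_n^i, f_n^i, t_n^i(l), w_n^i(l)$ and $r_n$, and since two such expressions represent the same tangle if and only if they are related by a finite sequence of the moves (1)--(21), it suffices to (a) assign to each generator a functor between the appropriate categories $\mathcal{C}_{m,\bullet}$, and (b) check that each relation holds as an isomorphism of functors. The functorial identity $\Psi^{\prime}(\alpha) \circ \Psi^{\prime}(\beta) \simeq \Psi^{\prime}(\alpha \circ \beta)$ is then immediate, since composition of tangles corresponds to composition of the assigned functors by construction.

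For the assignment on generators I would realize each functor as a Fourier--Mukai transform, following \cite{ck} and \cite{an}. Concretely: to the cup $g_n^i$ I associate a pushforward/pullback along the natural correspondence relating the categories $\mathcal{C}_{m,q-1}$ and $\mathcal{C}_{m,q}$ of Proposition \ref{cmn}, and to the cap $f_n^i$ its adjoint; these are the geometric ``induction'' and ``restriction'' functors. To the crossings $t_n^i(1)$ and $t_n^i(2)$ I associate the two inverse Rickard complexes (a spherical twist and its inverse), each built as a cone of the adjunction morphism relating $g_n^i \circ f_n^i$ (resp.\ $f_n^i \circ g_n^i$) to the identity. To the framing twists $w_n^i(l)$ I associate tensoring by the appropriate power of the tautological line bundle attached to the $i$-th point, and to the rotation $r_n$ the autoequivalence of $\mathcal{C}_{m,n}$ induced by the cyclic rotation of the Mirkovic--Vybornov slice \cite{mv}.

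The bulk of the work is step (b). The relations (1)--(11), which involve only the linear generators, are exactly those already established for the Cautis--Kamnitzer functors; the cleanest route is to observe that the cup and cap functors assemble into a geometric categorical $\mathfrak{sl}_2$ action, whence the isotopy relations (1), (5)--(10), the Reidemeister~2 relation (3), and especially the Reidemeister~3 braid relation (4) follow formally from the categorification machinery of \cite{ck}. The Reidemeister~1 relation (2) is where the framing is genuinely needed: it holds precisely because the composite $f_n^i \circ t_n^{i\pm1}(l) \circ g_n^i$ computes the framing twist $w_n^i(l)$ rather than the identity, which I would confirm by an explicit computation of the corresponding convolution of kernels.

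The genuinely new content, and the main obstacle, is the affine part: the relations (12)--(15) and (21) governing $r_n$, together with the twist relations (16)--(20). Here one must show that conjugation by $\Psi^{\prime}(r_n)$ carries $g_n^i, f_n^i, t_n^i(l), w_n^i(l)$ to $g_n^{i+1}, f_n^{i+1}, t_n^{i+1}(l), w_n^{i\mp1}(l)$ up to canonical isomorphism, and that the ``wrap-around'' relations (such as $f_n^{n-1} \circ (r_n)^2 = f_n^1$) hold. I expect this to be the hard step, because the cyclic symmetry is not manifest in the matrix presentation of $S_{m,n}$ and must be transported through the equivalence of Proposition \ref{cmn}; the key is to exhibit the rotation as an honest automorphism of the ambient space $U_e$ and to track its effect on the Fourier--Mukai kernels defining the generators. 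Once these intertwining isomorphisms are established, every relation involving $r_n$ reduces to an already-verified linear relation, completing the construction of $\Psi^{\prime}$.
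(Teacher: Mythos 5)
Your overall strategy --- present the category of framed affine tangles by the generators $g_n^i, f_n^i, t_n^i(l), w_n^i(l), r_n$ modulo relations (1)--(21), assign a Fourier--Mukai kernel to each generator, and verify each relation as an isomorphism of functors --- is the strategy of the source this proposition is quoted from (Section 4 of \cite{an}; the paper itself records only the statement and a pointer there). Your treatment of the linear generators also matches that source and the analogous Proposition \ref{tanglecat} of this paper: cups and caps as push--pull along correspondences, the crossings $t_n^i(1), t_n^i(2)$ as the two cones on the adjunction maps relating $g_n^i f_n^i$ to the identity, framing twists as line-bundle twists, with Reidemeister 1 yielding $w_n^i(l)$ rather than the identity.

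The gap is in the affine part, which you rightly single out as the main obstacle but propose to resolve in a way that does not work. You want to define $\Psi^{\prime}(r_n)$ as the autoequivalence induced by a ``cyclic rotation of the Mirkovic--Vybornov slice,'' and your plan hinges on exhibiting that rotation as an honest automorphism of $U_e$. No such automorphism exists: a cyclic rotation of the $m+2n$ marked points is not induced by any symmetry of $S_{m,n}$, $U_e$, or $\mathcal{B}_e$, and it cannot be, because $r_n = s_n^n \circ t_n^{n-1}(2) \circ \cdots \circ t_n^1(2)$ is a composition of a line-bundle twist with $n-1$ nontrivial cone functors, hence is not pullback along any automorphism of the underlying variety. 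The construction actually used (see the proof of Proposition \ref{tanglecat} here, and Theorem 4.27 and Proposition 4.25 of \cite{an}) replaces the generator $r_n$ by $s_n^n$ via exactly the identity above: one defines $\Psi^{\prime}(s_n^n)$ to be tensoring with the appropriate line bundle (the analogue of $\Lambda_n^{-1}$), checks only the two relations coupling $s_n^n$ to the cups and caps, and then \emph{defines} $\Psi^{\prime}(r_n)$ as the composite $\Psi^{\prime}(s_n^n) \circ \Psi^{\prime}(t_n^{n-1}(2)) \circ \cdots \circ \Psi^{\prime}(t_n^1(2))$, so that relations (12)--(15) and (21) reduce to already-verified linear relations. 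Without this substitution, your step (b) for the rotation relations has no starting point.
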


These are defined by Fourier-Mukai transforms on the categories of coherent sheaves; see Section $4$ of \cite{an} for more details of the proof. These functors can be used to describe the simple exotic sheaves as follows.  

\begin{definition} \label{matching} Let $\text{Cross}(m,n)$ be the set of all matchings on an annulus with $m$ unlabelled points on the inner circle, $m+2n$ labelled points on the outer circle (with labels $1, \cdots, m+2n$), such that there are no crossings or loops, and each point on the inner circle is connected to a point on the outer circle. \end{definition}

\begin{example} \label{cross22} See Figure \ref{example} for an example of an element $\alpha \in \text{Cross}(2,2)$. \end{example}

\begin{figure} 
\centering
\captionsetup{justification=centering}
\includegraphics[scale=0.75]{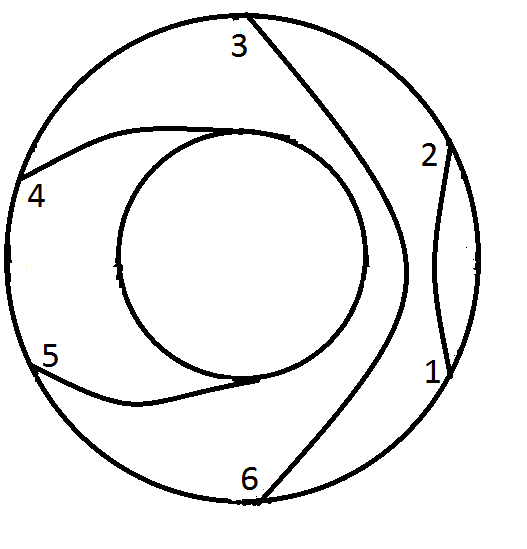}
\caption{}
\label{example}
\end{figure}

\begin{definition} \label{defofPsiprime} Each $\alpha \in \text{Cross}(m,n)$ can be viewed as a framed tangle, with the trivial framing. Thus using Proposition \ref{tanglefunctor} we have a functor $\Psi^{\prime}(\alpha): \mathcal{C}_{m,0} \rightarrow \mathcal{C}_{m,n}$. Since $\mathcal{B}_e$ is a point when $n=0$, it follows that $\mathcal{C}_{m,0} \simeq D^b(\text{Vect})$.Given $\alpha \in \text{Cross}(m,n)$, let $\Psi^{\prime}_{\alpha} = \Psi^{\prime}(\alpha)(\textbf{k})$. \end{definition}
\begin{theorem} \label{exotic} The objects $\{ \Psi^{\prime}_{\alpha} \; | \; \alpha \in \text{Cross}(m,n) \}$ are the simple exotic sheaves. \end{theorem}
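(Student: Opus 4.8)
The plan is to verify the three properties that characterize a complete, irredundant list of simple objects in the heart of the exotic t-structure on $\mathcal{C}_{m,n}$: that each $\Psi^{\prime}_{\alpha}$ lies in the exotic heart and is simple there, that the $\Psi^{\prime}_{\alpha}$ are pairwise non-isomorphic as $\alpha$ ranges over $\text{Cross}(m,n)$, and that they exhaust every simple exotic sheaf. The first two properties I would extract from the behaviour of the elementary cup and cap functors $\Psi^{\prime}(g_n^i)$ and $\Psi^{\prime}(f_n^i)$ relative to the exotic t-structures on the tower of categories $\mathcal{C}_{m,p}$, using the functoriality of Proposition \ref{tanglefunctor} to guarantee that $\Psi^{\prime}_{\alpha}$ is independent of how $\alpha$ is written as a product of cups; the last property I would obtain from a dimension count.

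For membership in the heart and simplicity, I would argue by induction on $n$, the number of cups used to build $\alpha$. The base case $n=0$ is immediate: $\mathcal{C}_{m,0} \simeq D^b(\text{Vect})$ by Definition \ref{defofPsiprime}, and $\textbf{k}$ is its unique simple object. For the inductive step, write $\alpha$ as a cup $g_{m+2n}^i$ applied to a matching $\alpha^{\prime} \in \text{Cross}(m,n-1)$, so that $\Psi^{\prime}_{\alpha} = \Psi^{\prime}(g_{m+2n}^i)(\Psi^{\prime}_{\alpha^{\prime}})$. The key technical inputs are that the cup functor, after a suitable cohomological normalization, is t-exact for the exotic t-structures and is fully faithful; the latter is a consequence of the adjunction relation $f_n^{i+1}\circ g_n^i = \text{id}$ of \ref{AFTanMovesFirst}, which furnishes an invertible counit. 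A t-exact, fully faithful functor carries a simple object to a simple object, so $\Psi^{\prime}_{\alpha}$ is simple in the exotic heart, completing the induction.

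For pairwise non-isomorphy and exhaustion, I would compute the graded Hom spaces $\text{Hom}^{\bullet}(\Psi^{\prime}_{\alpha}, \Psi^{\prime}_{\beta})$ by repeatedly using biadjunction to slide the cap functors dual to $\beta$ past the cup functors building $\alpha$. The tangle relations controlling this reduction are the adjunction \ref{AFTanMovesFirst} and the cup--cap interchange relation \ref{TanMovesLast}; together they collapse any composite of caps and cups to a scalar multiple of the identity on $\mathcal{C}_{m,0}$ exactly when $\alpha$ and $\beta$ match compatibly, and kill it in low degrees otherwise. This yields $\text{Hom}(\Psi^{\prime}_{\alpha},\Psi^{\prime}_{\beta})=\delta_{\alpha\beta}\,\textbf{k}$, and in particular $\Psi^{\prime}_{\alpha}\not\simeq\Psi^{\prime}_{\beta}$ for $\alpha\neq\beta$. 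Finally, under the equivalence of Proposition \ref{cmn} the simple exotic sheaves correspond to the irreducible objects of $\text{Mod}^{\text{fg}}_{\hat e,\lambda}(U\mathfrak g)$, whose number equals the number of irreducible components of the two-block Springer fibre $\mathcal{B}_e$; this number is precisely $|\text{Cross}(m,n)|$. Having produced $|\text{Cross}(m,n)|$ distinct simples, we conclude they are all of them.

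The main obstacle is the t-exactness and simple-preservation of the cup functors with respect to the exotic t-structure. Because the exotic t-structure is not defined by an explicit perversity condition but abstractly, through the Bezrukavnikov--Riche braid-group action and BMR localization, one cannot read t-exactness directly off the Fourier--Mukai kernels underlying Proposition \ref{tanglefunctor}. I would resolve this by giving an intrinsic characterization of the exotic heart in terms of standard and costandard objects and checking that the cup functors send standards to standards and costandards to costandards, up to shift, so that they are automatically exotic t-exact; the fully-faithfulness then upgrades t-exactness to the preservation of simplicity used above. Establishing this compatibility of the cup functors with the standard and costandard objects is the crux of the argument.
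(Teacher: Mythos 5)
The first thing to say is that the paper does not actually prove Theorem \ref{exotic}: the statement is imported from Anno--Nandakumar, and the only ``proof'' in the text is the pointer to Section 5.3 of \cite{an}. So there is no internal argument to measure your proposal against; the relevant comparison is with that reference, and your outline does track its broad strategy (build $\Psi'_{\alpha}$ by composing cup functors, show those functors are exact for the exotic t-structures and preserve simplicity, separate the $\Psi'_{\alpha}$ by graded Hom computations driven by cup--cap adjunctions, and finish with a count).

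As a standalone argument, however, your sketch has concrete gaps. First, the full-faithfulness mechanism is misattributed: relation \ref{AFTanMovesFirst}, $f_n^{i+1}\circ g_n^i=\mathrm{id}$, is the planar-isotopy (zigzag) relation between tangles at \emph{adjacent} indices, not the counit of an adjunction between $\Psi'(g_n^i)$ and $\Psi'(f_n^{i+1})$. The actual left and right adjoints of the cup functor are the cap functor at the \emph{same} index $i$, with shifts, and what one really uses is the circle relation $\Psi'(f_n^i)\Psi'(g_n^i)\simeq \mathrm{id}[-1]\{1\}\oplus\mathrm{id}[1]\{-1\}$ together with the vanishing of negative self-Exts of heart objects to conclude that $\Psi'(g_n^i)$ is fully faithful on the exotic heart. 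Second, the exhaustion step: the number of simples in a finite-length heart equals the rank of the Grothendieck group of $\mathcal{C}_{m,n}$, and identifying that rank with $|\mathrm{Cross}(m,n)|=\binom{m+2n}{n}-\binom{m+2n}{n-1}$ is a statement about the K-theory of the slice version $U_e$ (equivalently, about the $U_q(\mathfrak{sl}_2)$ multiplicity space whose basis is indexed by crossingless matchings); quoting ``the number of irreducible components of $\mathcal{B}_e$'' names the right integer but does not by itself supply the equality with the number of simples. Third, you explicitly defer the crux --- exotic t-exactness of the cup functors --- to a proposed standard/costandard characterization that you do not carry out; in \cite{an} this is exactly where the Bezrukavnikov--Riche affine braid group description of the t-structure does real work. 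As written, your text is a credible plan consistent with the cited proof, but not yet a proof.
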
 
See Section 5.3 of \cite{an} for a proof of the above theorem. 

\section{Coherent sheaves on Cautis-Kamnitzer varieties and the main theorem} \label{cohsheaves}



We will need Cautis-Kamnitzer's framework for much of what follows. The Cautis-Kamnitzer varieties $Y_k$ are constructed as follows (see Section 2.1 of \cite{ck} for more details):

\begin{definition} \label{defofY_k} Consider a $2k$-dimensional vector space $V_k$ over $\mathbb C$ with basis $e_1, \ldots , e_{k}, f_1, \ldots ,f_k$ and a nilpotent $z$ such that $z e_{i+1} = e_{i}$, $z f_{i+1} = f_{i}$ for $i \geq 1$, and $ze_1 = zf_1=0$. Let $L_0=0$. Then:  \begin{align*} Y_{k} = \{ (L_1 \subset \cdots \subset L_{k} \subset V_{k}) \; | \; \text{ dim}(L_i) = i,\ zL_{i} \subset L_{i-1} \} \end{align*} 

Let $\mathbb C^*$ act on $V_k$ by $t \cdot e_i = t^{-2i} e_i$ and $t \cdot f_i = t^{-2i} f_i$. This $\mathbb C^*$-action induces a $\mathbb C^*$-action on $Y_k$ by $t \cdot (L_1,...,L_k) = (t \cdot L_1,...,t \cdot L_k)$. Denote by $K^0_{\mathbb C^*}(Y_{k})$ the equivariant $K$-theory of $Y_k$ with respect to the above action of $\mathbb C^*$.
\end{definition}

We have a natural forgetful map $p_k: Y_k \rightarrow Y_{k-1}$ which is a $\mathbb{P}^1$-bundle. To see this, suppose that we have $(L_1,..., L_{k-1}) \in Y_{k-1}$ and are considering possible choices of $L_k$. It
is easy to see that we must have $L_{k-1} \subset L_k \subset z^{-1}(L_k)$. Since $z^{-1}(L_{k-1})/L_{k-1}$ is always two dimensional,
this fibre is a ${\mathbb P}^1$. It follows that $Y_k$ is an iterated $\mathbb{P}^1$-bundle of dimension $k$, since $Y_1 \simeq \mathbb{P}^1$.

\begin{definition} Let $\mathcal{V}_s$ be the tautological vector bundle on $Y_k$ corresponding to $L_s$. For each $1 \leq s \leq k$, we have a line bundle $\Lambda_s = \mathcal{V}_s/\mathcal{V}_{s-1}$ on this space (note that non-equivariantly we have $\Lambda_k = \mathcal{O}(-1)_{Y_k / Y_{k-1}}$, i.e. the relative $\mathcal{O}(-1)$ on the $\mathbb{P}^1$-fibration $Y_k \rightarrow Y_{k-1}$). Given $i_s \in \{ 0,1 \}$ for each $1 \leq s \leq k$, define also the following line bundle: $$\Lambda_{i_1, i_2, \cdots, i_k} := \bigotimes_{1 \leq s \leq k} \Lambda_{s}^{i_s}$$ \end{definition} 

Recall from Section $6.2$ of \cite{ck} that:
\begin{proposition} \label{K_0ofY_k} We have that $K^0_{\mathbb C^*}(Y_{k})$ is a free $\mathbb{C}[q, q^{-1}]$-module with generators given by the classes of the line bundles $[\Lambda_{i_1, i_2, \cdots, i_k}]$, where $i_s \in \{ 0, 1 \}$ for each $1 \leq s \leq k$. \end{proposition}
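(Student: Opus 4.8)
The plan is to proceed by induction on $k$, using the $\mathbb{P}^1$-bundle structure of the forgetful map $p_k\colon Y_k \to Y_{k-1}$ together with the projective bundle formula for equivariant $K$-theory. The base case $k=1$ is immediate: $Y_1 \simeq \mathbb{P}^1$, and the two line bundles $\Lambda_{0} = \mathcal{O}$ and $\Lambda_{1}$ freely generate $K^0_{\mathbb{C}^*}(\mathbb{P}^1)$ over $\mathbb{C}[q,q^{-1}]$, since equivariantly $K^0_{\mathbb{C}^*}(\mathbb{P}^1)$ is free of rank $2$ on $[\mathcal{O}]$ and $[\mathcal{O}(-1)]$ (the relative $\mathcal{O}(-1)$ being $\Lambda_1$).

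For the inductive step, I would invoke the projective bundle theorem in the equivariant setting: since $p_k\colon Y_k \to Y_{k-1}$ is a $\mathbb{C}^*$-equivariant $\mathbb{P}^1$-bundle, $K^0_{\mathbb{C}^*}(Y_k)$ is a free module over $K^0_{\mathbb{C}^*}(Y_{k-1})$ with basis $\{[\mathcal{O}], [\Lambda_k]\}$, where $\Lambda_k = \mathcal{V}_k/\mathcal{V}_{k-1}$ is the relative tautological line bundle. Pulling back (via $p_k^*$) the inductive generators $\Lambda_{i_1,\dots,i_{k-1}}$ of $K^0_{\mathbb{C}^*}(Y_{k-1})$ and tensoring each with either $\Lambda_k^0 = \mathcal{O}$ or $\Lambda_k^1 = \Lambda_k$, I obtain exactly the $2^k$ classes $[\Lambda_{i_1,\dots,i_k}]$. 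Freeness then follows formally: a free basis of a free $K^0_{\mathbb{C}^*}(Y_{k-1})$-module, where $K^0_{\mathbb{C}^*}(Y_{k-1})$ is itself free over $\mathbb{C}[q,q^{-1}]$ of rank $2^{k-1}$, yields a free $\mathbb{C}[q,q^{-1}]$-module of rank $2^k$, and the product basis is precisely the collection of the $\Lambda_{i_1,\dots,i_k}$.

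The one point requiring genuine care is the identification of $p_k^*\Lambda_s$ with $\Lambda_s$ on $Y_k$ for $s < k$: one must check that the tautological bundles $\mathcal{V}_s$ on $Y_k$ coincide with the pullbacks $p_k^*\mathcal{V}_s$ from $Y_{k-1}$, so that the subquotients $\Lambda_s$ for $s<k$ are genuinely pulled back and the product $\bigotimes_{s<k}\Lambda_s^{i_s} \otimes \Lambda_k^{i_k}$ matches $p_k^*(\Lambda_{i_1,\dots,i_{k-1}}) \otimes \Lambda_k^{i_k}$. This is transparent from the definition of $Y_k$, since the flag $(L_1 \subset \cdots \subset L_{k-1})$ underlying a point of $Y_k$ is its image under $p_k$, so $\mathcal{V}_s = p_k^*\mathcal{V}_s$ for $s \le k-1$ by construction.

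The main obstacle I anticipate is the equivariant bookkeeping: one must confirm that the $\mathbb{C}^*$-action interacts with the bundle structure so that the relative $\mathcal{O}(1)$ generating the projective bundle splitting can be taken to be $\Lambda_k^{-1}$ equivariantly, and that the splitting is as $\mathbb{C}[q,q^{-1}]$-modules with the correct equivariant twists absorbed into the coefficient ring. Since the $\mathbb{C}^*$-action is defined compatibly at each stage $t\cdot(L_1,\dots,L_k)=(t\cdot L_1,\dots,t\cdot L_k)$ and restricts to the fibres, the equivariant projective bundle formula applies verbatim, and the variable $q$ records the equivariant weight; no obstruction beyond careful tracking of weights arises, which is why I expect the argument to go through cleanly by induction.
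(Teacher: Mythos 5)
Your proof is correct and follows essentially the same route the paper relies on: the paper itself gives no proof but cites Section 6.2 of Cautis--Kamnitzer, while setting up exactly the iterated $\mathbb{P}^1$-bundle structure $p_k\colon Y_k \to Y_{k-1}$ that your induction via the equivariant projective bundle formula exploits. Your attention to the identification $p_k^*\Lambda_s \simeq \Lambda_s$ for $s<k$ and to the equivariant weights is the right place to be careful, and no gap remains.
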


\begin{definition} \label{def:K^0} 
Let $V$ denote the standard representation of $U_q(\mathfrak{sl}_2)$. It is a free ${\mathbb C}[q, q^{-1}]$-module with basis $v_0$, $v_1$.
Identify $K^0_{\mathbb C^*}(Y_{k})$ with $V^{\otimes k}$ as $\mathbb{C}[q, q^{-1}]$-modules, so that $$ [\Lambda_{i_1, \cdots, i_{k}}] = q^{-\sum_s s i_s}v_{i_1} \otimes \cdots \otimes v_{i_k}$$ \end{definition} \begin{definition} Denote the monomial basis for $V^{\otimes k}$ as follows. Let $I \subset \{ 1, \cdots, k \}$ be a subset, and for $1 \leq a \leq n$ let $I(a) = 1$ if $a \in I$, and $I(a) = 0$ if $a \notin I$. Define: $$v_I = v_{I(1)} \otimes v_{I(2)} \otimes \cdots \otimes v_{I(k)} $$ 
\end{definition}

\begin{definition} Let $\mathcal{D}_k = D^b_{\mathbb C^*}\text{Coh}(Y_k)$ be the equivariant derived category of coherent sheaves on $Y_k$. \end{definition}

\begin{definition} \label{def_of_fgt}  
For each linear $(r,s)$-tangle $\alpha$, we define the map $\psi(\alpha): V^{\otimes r} \rightarrow V^{\otimes s}$ of $U_q({\mathfrak sl}_2)$-modules as follows. It suffices to define the map on the elementary linear tangles. Below $k \in \{ 1, 2 \}$ and $i,j \in \{0, 1 \}$.
\begin{align*}\psi(f_2^1): & V^{\otimes 2} \rightarrow \mathbb{C}[q, q^{-1}]; \qquad \psi(f_2^1)(v_i \otimes v_i) = 0, \psi(f_2^1)(v_0 \otimes v_1) = q, \psi(f_2^1)(v_1 \otimes v_0) = -1. \\
\psi(g_2^1): & \mathbb{C}[q, q^{-1}] \rightarrow V^{\otimes 2}; \qquad \psi(g_2^1)(1) = q^{-1} v_1 \otimes v_0 - v_0 \otimes v_1. \\
\psi(t_2^1(1)): & V^{\otimes 2} \to V^{\otimes 2}; \qquad \psi(t_2^1(1))(v_0 \otimes v_0) = v_0 \otimes v_0, \psi(t_2^1(1))(v_0 \otimes v_1) = (1-q^2)v_0 \otimes v_1 + q v_1 \otimes v_0, \\
\ \ \ \ \ \ \ \ \ \ \ \ \ \ \  & \psi(t_2^1(1))(v_1 \otimes v_0) = v_0 \otimes v_1, \psi(t_2^1(1))(v_1 \otimes v_1) = v_1 \otimes v_1. \\ 
\psi(t_2^1(2)): & V^{\otimes 2} \to V^{\otimes 2}; \qquad \psi(t_2^1(2))(v_0 \otimes v_0) = v_0 \otimes v_0, \psi(t_2^1(2))(v_0 \otimes v_1) = q^{-1}v_1 \otimes v_0, \\
\ \ \ \ \ \ \ \ \ \ \ \ \ \ \  & \psi(t_2^1(2))(v_1 \otimes v_0) = (1-q^{-2})v_1 \otimes v_0 + q^{-1} v_0 \otimes v_1, \psi(t_2^1(2))(v_1 \otimes v_1) = v_1 \otimes v_1. \end{align*}
Now we define $\psi(g_n^i)$ as follows, and similarly for $\psi(f_n^i)$ and $\psi(t_n^i(k))$. 
\begin{align*} \psi(g_n^i): & V^{\otimes n-2} \rightarrow V^{\otimes n};  \psi(g_n^i) = \text{id}_{V^{\otimes i-1}} \otimes \psi(g_2^1) \otimes  \text{id}_{V^{\otimes n-2-i+1}}  \end{align*}
Note that $\psi$ is well-defined due to \cite{rt} and the introduction to \cite{ck}.
\end{definition} 


\begin{remark} We can identify $\psi(\alpha): V^{\otimes r} \rightarrow V^{\otimes s}$ with the Reshetikhin-Turaev invariant of $\alpha$ from \cite{rt} up to a change of notation.
\end{remark}

The below proposition is a strengthening of the results in Section 6 of \cite{ck}, the difference being that we use affine tangles (and not linear tangles). 

\begin{proposition} \label{tanglecat} For each $(q,r)$-tangle $\alpha$, there exist a functor $\Psi(\alpha): \mathcal{D}_q \rightarrow \mathcal{D}_r$, and these are compatible under composition, i.e. given an $(r,s)$-angle $\beta$, $\Psi(\alpha \circ \beta) \simeq \Psi(\alpha) \circ \Psi(\beta)$. When $\alpha$ is a linear tangle, the image of $\Psi(\alpha)$ in the Grothendieck group is the Reshetikhin-Turaev invariant $\psi(\alpha)$. \end{proposition}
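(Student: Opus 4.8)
The plan is to use the presentation of the framed affine tangle category by the elementary tangles $g_n^i, f_n^i, t_n^i(k), w_n^i(k), r_n$ subject to the relations \ref{AFTanMovesFirst}--\ref{AFTanMovesLast}. To build $\Psi$ it then suffices to (a) attach to each elementary tangle a Fourier--Mukai kernel, i.e. an object of $D^b_{\mathbb C^*}\mathrm{Coh}(Y_q \times Y_r)$ and hence a functor $\mathcal D_q \to \mathcal D_r$, and (b) check that each relation holds as an isomorphism of the convolved kernels. Once (b) is known, $\Psi(\alpha)$ is independent of the word representing $\alpha$, and the compatibility $\Psi(\alpha \circ \beta) \simeq \Psi(\alpha) \circ \Psi(\beta)$ is automatic, since composition of tangles corresponds to convolution of kernels, which is associative. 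For the linear generators $g_n^i, f_n^i, t_n^i(k)$ I would take the $\mathbb C^*$-equivariant lift of the kernels constructed in Section 6 of \cite{ck}; all relations in the list involving only $g, f, t$ are precisely the functor isomorphisms established there, so nothing new is needed for those.

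For the framing twists I would set $\Psi(w_n^i(l)) := \Psi(f_n^i) \circ \Psi(t_n^{i+1}(l)) \circ \Psi(g_n^i)$, so that the Reidemeister-I relation \ref{AFTanReidemeister1} holds by construction; one then verifies that the two curls (using $t_n^{i+1}$ versus $t_n^{i-1}$) give isomorphic functors, that $\Psi(w_n^i(1))$ and $\Psi(w_n^i(2))$ are mutually inverse, and that each is an invertible grading-shift equivalence. Since the Cautis--Kamnitzer Reidemeister-I computation already exhibits the curl as such an invertible twist, these are essentially bookkeeping, and the purely twist-and-strand relations \ref{AFTanMovesFirstTwist} through \ref{FTanMovesLast} then follow by combining this description with the pitchfork relations \ref{AFTanPitchfork}; the interaction of the twist with the rotation, relation \ref{AFTanMovesLast}, is deferred to the next step.

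The crux is the rotation $r_n$, for which \cite{ck} gives no guidance because it is genuinely affine. Using the stated factorization $r_n = s_n^n \circ t_n^{n-1}(2) \circ \cdots \circ t_n^1(2)$ it is enough to define $\Psi(s_n^n)$, the functor that drags a single strand once around the annulus beneath the others, which I would realize as an explicit Fourier--Mukai kernel on $Y_n \times Y_n$ modeling this wrap-around correspondence, following the analogous construction of Anno--Nandakumar in \cite{an} transported to the Cautis--Kamnitzer varieties. The main obstacle is then verifying the rotation relations: that $r_n$ and $r_n'$ are inverse equivalences, and that conjugation by $r_n$ shifts the index of $f_n^i, g_n^i$ and $t_n^i(k)$ by one. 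The ``interior'' cases $i = 1, \dots, n-2$ should follow from the braid and pitchfork relations \ref{AFTanPitchfork}, \ref{TanMovesLast} for the $t$'s together with the kernel for $s_n^n$; the genuinely new content lies in the wrap-around identities involving $(r_n)^2$, where the strand crosses the seam of the annulus, and establishing these as kernel isomorphisms on $Y_n \times Y_n$ is where the bulk of the geometric work sits. Relation \ref{AFTanMovesLast} is checked here as well.

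Finally, for the Grothendieck-group statement I would restrict to linear $\alpha$ and check the claim on the linear generators. Using the identification $K^0_{\mathbb C^*}(Y_k) \simeq V^{\otimes k}$ of Definition \ref{def:K^0}, one computes the class of each Cautis--Kamnitzer kernel and matches it against the explicit formulas for $\psi(g_2^1), \psi(f_2^1), \psi(t_2^1(1)), \psi(t_2^1(2))$ in Definition \ref{def_of_fgt}; multiplicativity of $[\,\cdot\,]$ under convolution then yields $[\Psi(\alpha)] = \psi(\alpha)$ for every linear tangle, the well-definedness of $\psi$ being guaranteed by \cite{rt}.
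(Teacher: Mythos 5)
Your overall strategy---presenting the framed affine tangle category by the elementary generators and relations, taking the (equivariant lifts of the) Cautis--Kamnitzer kernels for the linear generators, and reducing the affine part to a single functor $\Psi(s_n^n)$ via the factorization $r_n = s_n^n \circ t_n^{n-1}(2) \circ \cdots \circ t_n^1(2)$---is exactly the paper's. But the one place where the proposition contains genuinely new content is precisely the point you leave as a placeholder: you propose to ``realize $\Psi(s_n^n)$ as an explicit Fourier--Mukai kernel on $Y_n \times Y_n$ modeling the wrap-around correspondence'' and locate ``the bulk of the geometric work'' in verifying the seam-crossing identities involving $(r_n)^2$. Without specifying that kernel the argument is not complete, and the phrasing suggests you expect a nontrivial correspondence. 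The paper's answer is much simpler: $\Psi(s_k^k)$ is the functor of tensoring with the line bundle $\Lambda_k^{-1}$ (equivalently, the kernel is the structure sheaf of the diagonal twisted by $\Lambda_k^{-1}$), and the rotation relations---including the wrap-around ones and relation \ref{AFTanMovesLast}---are then checked by the verbatim argument of Theorem 4.27 and Proposition 4.25 of \cite{an}, since a line-bundle twist interacts with the cap and cup kernels in a completely controlled way. So the missing idea is the identification of the affine generator with a line-bundle twist; once that is in place, the geometric work you defer collapses to a short computation, and your verification plan for the remaining relations goes through.

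Two minor points of divergence from the paper, neither of which affects correctness. First, the paper does not take the crossing kernels directly from \cite{ck}: it defines $\Psi(t^i_{m+2n}(1))$ and $\Psi(t^i_{m+2n}(2))$ as cones in the distinguished triangles built from $\Psi(g^i_{m+2n})\Psi(f^i_{m+2n})$ with shifts, which is why the decategorified formulas take the form $\psi(t_2^1(1)) = 1 + q\,\psi(g_2^1)\psi(f_2^1)$ and $\psi(t_2^1(2)) = 1 + q^{-1}\psi(g_2^1)\psi(f_2^1)$. Second, for the Grothendieck-group statement the paper simply invokes Theorem 6.5 of \cite{ck} rather than recomputing classes of kernels generator by generator; your generator-by-generator check is a legitimate alternative but duplicates that cited result.
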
 \begin{proof} 
When $\alpha$ is a linear tangle, the functors are certain Fourier-Mukai kernels which are described in Section 4 of \cite{an}; the functorial relations are proven in Section 5. Define the functors $\Psi(t^i_{m+2n}(1))$ and $\Psi(t^i_{m+2n}(2))$ via the distinguished triangles:
$$\Psi(g^i_{m+2n}) \Psi(f^i_{m+2n})[-1]\{1\} \to \text{id} \to \Psi(t^i_{m+2n}(1)),$$ 
$$\Psi(t^i_{m+2n}(2)) \to \text{id} \to \Psi(g^i_{m+2n})\Psi(f^i_{m+2n})[1]\{-1\}.$$ 
We have $\psi(t_2^1(1))=1+q\psi(g_2^1)\psi(f_2^1)$ and 
$\psi(t_2^1(2))=1+q^{-1}\psi(g_2^1)\psi(f_2^1)$. From these formulas the above formulas for $\psi(t_2^1(1)), \psi(t_2^1(2))$ follow. 
Theorem 6.5 of \cite{ck} proves that, on the level of Grothendieck groups, $\Psi(\alpha)$ corresponds to $\psi(\alpha)$.
It remains to construct the functors $\Psi(\alpha)$ when $\alpha$ is an affine tangle; this follows from the techniques used in Section 4.5 of \cite{an}. It suffices to construct a functor $\Psi(s_{k}^k)$, and check that it satisfies the necessary functorial tangle relations (13) and (14) on page 7. Define $\Psi(s_{k}^k)$ to be the functor of tensoring with the line bundle $\Lambda_k^{-1}$; to check the functorial tangle relations, the same argument used in Theorem 4.27 and Proposition 4.25 of \cite{an} works verbatim. 

Alternatively, see Proposition 6.4 in \cite{ckquantumgeometricsatake}
\end{proof}

\begin{definition} Let $W_{m,n} \subset V_{m+2n}$ denote the vector subspace with basis $e_1, \ldots, e_{m+n}, f_1, \cdots, f_n$, so that $z|_{W_{m,n}}$ has Jordan type $(m+n,n)$. Identify $\mathfrak{sl}_{m+2n}$ with $\mathfrak{sl}(W_{m,n})$.
Denote by $\mathcal{B}_{m+n,n}$ the Springer fiber for $z \in \mathfrak{sl}(W_{m,n})$: $$ \mathcal{B}_{m+n,n} = \{ (0 \subset U_1 \subset \cdots \subset U_{m+2n-1} \subset U_{m+2n} = W_{m,n}) \; | \; z U_i \subseteq U_{i-1} \} $$ \end{definition}

By analogy with Definition \ref{defofPsiprime} (cf. Theorem \ref{exotic}) we define:

\begin{definition} \label{crossingless} For any $\alpha \in \text{Cross}(m,n)$, the {\it crossingless sheaf} $\Psi_\alpha$ is defined as the object $\Psi({\mathcal O}_{\mathcal{B}_{m,0}})$.
\end{definition}

Note that $\mathcal{B}_{m,0}$ is a point and $\Psi({\mathcal O}_{\mathcal{B}_{m,0}})$ is the skyscraper sheaf supported on the point $\mathcal{B}_{m,0}$ in $Y_m$.

In the remaining part of the paper we compute the classes of $\Psi_\alpha$ in equivariant K-theory of Cautis-Kamnitzer varieties.

For our computations, we will need the following lemma which tells us what the effect of tensoring by $\Lambda_i^{-1}$ on $K^0(Y_k)$ is, when $i \leq k$. It more or less follows from the argument used to prove Theorem $6.2$ of \cite{ck} (see equation (20) in particular). 
\begin{lemma} \label{inverse} We have the following identity. $$[\Lambda_{i}^{-1}] = 2 q^{-2}[\mathcal{O}] - q^{-4}[\Lambda_i]$$ \end{lemma}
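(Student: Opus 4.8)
The plan is to prove the identity on the $\mathbb{P}^1$-bundle $p_i\colon Y_i \to Y_{i-1}$ and then pull it back along the projection $Y_k \to Y_i$. Since $\Lambda_i$, $\Lambda_i^{-1}$ and $\mathcal{O}$ are all pulled back from $Y_i$, and an identity of classes in equivariant K-theory is preserved under pullback, this reduces Lemma \ref{inverse} to a relative statement on a single $\mathbb{P}^1$-bundle. That relative statement is precisely the situation handled in the proof of Theorem $6.2$ of \cite{ck}, so I would follow the computation around equation $(20)$ there.

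First I would realize $Y_i$ as $\mathbb{P}(E_i)$ for the rank-two bundle $E_i = z^{-1}(\mathcal{V}_{i-1})/\mathcal{V}_{i-1}$ on $Y_{i-1}$ (this is exactly the $\mathbb{P}^1$-fibration structure noted after Definition \ref{defofY_k}), under which $\Lambda_i = \mathcal{V}_i/\mathcal{V}_{i-1}$ is the relative tautological subbundle $\mathcal{O}_{\mathbb{P}(E_i)}(-1)$. The tautological exact sequence $0 \to \Lambda_i \to p_i^* E_i \to Q_i \to 0$, together with $Q_i = p_i^*\det E_i \otimes \Lambda_i^{-1}$, gives in $K^0_{\mathbb{C}^*}(Y_i)$ the relation $[E_i] = [\Lambda_i] + [\det E_i]\,[\Lambda_i^{-1}]$, equivalently
$$[\Lambda_i^{-1}] = [\det E_i]^{-1}\bigl([E_i] - [\Lambda_i]\bigr).$$
Thus the entire content of the lemma is the pair of equivariant evaluations $[E_i] = 2q^{-2}[\mathcal{O}]$ and $[\det E_i] = q^{-4}[\mathcal{O}]$, after which the stated formula follows by direct substitution.

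These two evaluations are where the work lies, and I expect the weight bookkeeping to be the main obstacle. The input is the $\mathbb{C}^*$-structure: since $z e_{j+1} = e_j$ while $t\cdot e_j = t^{-2j}e_j$, the nilpotent $z$ is equivariant of weight $+2$, and $\ker z = \langle e_1, f_1\rangle$ carries a definite weight. I would then extract $[E_i]$ and $\det E_i$ from the equivariant short exact sequences $0 \to \ker z \to z^{-1}(\mathcal{V}_{i-1}) \xrightarrow{z} \mathcal{V}_{i-1} \to 0$ and $0 \to \mathcal{V}_{i-1} \to z^{-1}(\mathcal{V}_{i-1}) \to E_i \to 0$, being careful that the middle map $z$ shifts weights by $+2$. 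The delicate step is translating the geometric $\mathbb{C}^*$-weights into the powers of $q$ dictated by the normalization of Definition \ref{def:K^0}; it is exactly here that one must reconcile the position of the $\mathbb{P}^1$-factor with the uniform coefficients $2q^{-2}$ and $q^{-4}$, and I would import this matching verbatim from equation $(20)$ of \cite{ck}. As a consistency check one can verify the resulting identity against $K^0_{\mathbb{C}^*}(Y_k)$ written in the generating basis of Proposition \ref{K_0ofY_k}, or by restriction to the $\mathbb{C}^*$-fixed points of the iterated $\mathbb{P}^1$-bundle.
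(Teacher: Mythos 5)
Your approach is in substance identical to the paper's. The paper's proof also works with the rank-two bundle $E_i = z^{-1}\mathcal{V}_{i-1}/\mathcal{V}_{i-1}$: its first exact sequence $0 \to \ker(z) \to z^{-1}\mathcal{V}_{i-1} \to \mathcal{V}_{i-1} \to 0$ computes $[E_i]$ from $\ker(z) \simeq \mathcal{O}^{\oplus 2}\{2\}$, and its second exact sequence $0 \to \Lambda_i \to E_i \to E_i/\Lambda_i \to 0$ is exactly your tautological sequence, with the determinant manipulation producing your identification $E_i/\Lambda_i \simeq \Lambda_i^{-1} \otimes \det E_i$. So the skeleton $[\Lambda_i^{-1}] = [\det E_i]^{-1}\bigl([E_i] - [\Lambda_i]\bigr)$ is the right reduction, and your observation that everything is pulled back from $Y_i$ is fine (the paper works directly on $Y_k$ without needing it).

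However, the two equivariant evaluations you state are inconsistent with the conclusion you claim follows from them by direct substitution. If $[E_i] = 2q^{-2}[\mathcal{O}]$ and $[\det E_i] = q^{-4}[\mathcal{O}]$, then your formula gives
$$[\Lambda_i^{-1}] = q^{4}\bigl(2q^{-2}[\mathcal{O}] - [\Lambda_i]\bigr) = 2q^{2}[\mathcal{O}] - q^{4}[\Lambda_i],$$
which is the stated identity with $q$ replaced by $q^{-1}$, not the identity itself. In the paper's conventions one has $\ker(z) \simeq \mathcal{O}^{\oplus 2}\{2\}$ with $[\mathcal{O}\{2\}] = q^{2}[\mathcal{O}]$, hence $[E_i] = 2q^{2}[\mathcal{O}]$ and $[\det E_i] = q^{4}[\mathcal{O}]$, and then indeed $[\Lambda_i^{-1}] = q^{-4}\bigl(2q^{2}[\mathcal{O}] - [\Lambda_i]\bigr) = 2q^{-2}[\mathcal{O}] - q^{-4}[\Lambda_i]$. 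You flagged the weight bookkeeping as the delicate point and deferred it to equation (20) of the reference, which is honest, but as written your ``direct substitution'' step does not produce the lemma; you need to fix the sign of the exponent in the weight-to-$q$ dictionary (equivalently, decide once and for all whether the twist $\{1\}$ contributes $q$ or $q^{-1}$ to a class) before the computation closes up.
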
 \begin{proof} Consider the two exact sequences below. Note here that $z^{-1} \mathcal{V}_{i-1}$ denotes the vector bundle whose fiber at a point is the pre-image of the corresponding fiber of $\mathcal{V}_{i-1}$ at that point, under the map $z$.  \begin{align*} 0 \rightarrow \text{ker}(z) \rightarrow z^{-1} & \mathcal{V}_{i-1} \rightarrow \mathcal{V}_{i-1} \rightarrow 0 \\ 0 \rightarrow \mathcal{V}_i / \mathcal{V}_{i-1} \rightarrow z^{-1} \mathcal{V}_{i-1} & / \mathcal{V}_{i-1} \rightarrow z^{-1} \mathcal{V}_{i-1} / \mathcal{V}_i \rightarrow 0 \end{align*} Taking determinants, we obtain the following (noting that $\text{ker}(z) \simeq \mathcal{O}^{\oplus 2}\{2\}, \text{det}(\text{ker}(z)) \simeq \mathcal{O}\{4\}$): 
\begin{align*} \text{det}(z^{-1} \mathcal{V}_{i-1}) &\simeq \text{det}(\mathcal{V}_{i-1}) \otimes \text{det}( \mathcal{O}^{\oplus 2}\{2\}) \\ \Lambda_i \otimes z^{-1} \mathcal{V}_{i-1} / \mathcal{V}_i &\simeq \text{det}(z^{-1} \mathcal{V}_{i-1} / \mathcal{V}_{i-1}) \\ &\simeq \text{det}(z^{-1} \mathcal{V}_{i-1}) \otimes {\text{det}(\mathcal{V}_{i-1})}^{-1} \simeq \text{det}( \mathcal{O}^{\oplus 2}\{2\}) \\ z^{-1} \mathcal{V}_{i-1} / \mathcal{V}_i &\simeq \Lambda_i^{-1} \otimes \text{det}( \mathcal{O}^{\oplus 2}\{2\}) \end{align*} Using the first exact sequence, $[z^{-1} \mathcal{V}_{i-1} / \mathcal{V}_{i-1}] = 2 [\mathcal{O}\{2\}]$; so $[\Lambda_{i}^{-1} \otimes \text{det}( \mathcal{O}^{\oplus 2}\{2\})] = 2 [\mathcal{O}\{2\}] - [\Lambda_i]$. Therefore 
$[\Lambda_{i}^{-1} \otimes \mathcal{O}\{4\}] = 2 [\mathcal{O}\{2\}] - [\Lambda_i]$, or $q^4[\Lambda_{i}^{-1}]=2q^2[\mathcal{O}]-[\Lambda_i]$. \end{proof}

\begin{definition} Given an $(m+2p, m+2q)$-tangle $\alpha$, denote by $\overline{\alpha}: (\mathbb{C}^2)^{\otimes m+2p} \rightarrow (\mathbb{C}^2)^{\otimes m+2q}$ the map induced on the Grothendieck group by the functor $\Psi(\alpha): \mathcal{D}_{m+2p} \rightarrow \mathcal{D}_{m+2q}$. Given an element $\alpha \in \text{Cross}(m,n)$, denote by $\underline{\alpha} \in (\mathbb{C}^2)^{\otimes m+2n}$ the image of the irreducible object $\Psi_{\alpha}$ in the Grothendieck group. 
\end{definition} 

\begin{lemma} \label{computation} Let $i_s \in \{0,1\}$ and $l_s \in \mathbb Z$ for $1 \leq s \leq k$. Then we have:
\begin{align*}  \overline{r_k}(v_0 \otimes v_{i_2} \otimes \cdots \otimes v_{i_k}) &= (1-q^2) (\sum_{2 \leq j \leq k} q^{\{\# s<j, i_s=1\}} q^k \delta_{i_j,1} v_{i_2} \otimes \cdots \otimes v_{i_{j-1}} \otimes v_0 \otimes v_{i_{j+1}} \otimes \cdots v_{i_k} \otimes v_0) + \\& \ \  \ \ \ 
+ q^{\{\# s, i_s=1\}} v_{i_2} \otimes \cdots \otimes v_{i_k} \otimes (2q^{-2}v_0 - q^{-4} q^{-k} v_1)\\
\overline{r_k}(v_1 \otimes v_{i_2} \otimes \cdots \otimes v_{i_k}) &= q^k v_{i_2} \otimes \cdots \otimes v_{i_k} \otimes v_0  \\
[\Lambda_1^{l_1} \otimes \Lambda_2^{l_2} \otimes \cdots \otimes \Lambda_k^{l_k}] &= \bigotimes_{s=1}^k (l_s q^{2l_s-2}q^{-s}v_1 - (l_s-1)q^{2l_s} v_0)
\end{align*} \end{lemma}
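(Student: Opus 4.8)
The plan is to treat the three displayed identities separately, establishing the last one first since it governs the action of the wrap-around generator $s_k^k$ that drives the first two. Throughout I work in the ring $K^0_{\mathbb{C}^*}(Y_k)$ and transport everything to $V^{\otimes k}$ via Definition \ref{def:K^0}.

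For the third identity the key observation is that Lemma \ref{inverse} is equivalent to a nilpotence relation. Writing $\ell_i = [\Lambda_i]$ and multiplying $[\Lambda_i^{-1}] = 2q^{-2}[\mathcal{O}] - q^{-4}[\Lambda_i]$ by $\ell_i$ gives $\ell_i^2 = 2q^2\ell_i - q^4$, i.e. $(\ell_i - q^2)^2 = 0$. Since $Y_k$ is an iterated $\mathbb{P}^1$-bundle, the projective bundle formula identifies $K^0_{\mathbb{C}^*}(Y_k)$ with the tensor product over $\mathbb{C}[q,q^{-1}]$ of the $k$ subalgebras $\mathbb{C}[q,q^{-1}][\ell_s]/((\ell_s-q^2)^2)$, and under Definition \ref{def:K^0} the $s$-th factor is the $s$-th copy of $V$ via $[\mathcal{O}]\mapsto v_0$ and $\ell_s\mapsto q^{-s}v_1$. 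I would then expand, using nilpotence and the binomial theorem, $\ell_s^{l_s} = (1-l_s)q^{2l_s}[\mathcal{O}] + l_s q^{2l_s-2}[\Lambda_s]$, translate each factor by this dictionary, and take the tensor product; this yields the stated product formula. Once the nilpotence relation is in hand this identity is essentially bookkeeping.

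For the two $\overline{r_k}$ identities I would use the factorization of the rotation into the wrap-around generator and a cascade of crossings, $r_k = s_k^k\circ t_k^{k-1}\circ\cdots\circ t_k^1$, recorded in Section \ref{prelim}. The cascade is a linear tangle, so by Proposition \ref{tanglecat} its class on $K^0$ is the composite of R-matrices from Definition \ref{def_of_fgt} acting on adjacent factors; I record it as $\Sigma_k = (\mathrm{id}_V\otimes\Sigma_{k-1})\circ R_1$, where $R$ satisfies the two features that drive the computation: $R(v_1\otimes v_j) = v_j\otimes v_1$ for $j\in\{0,1\}$, so a leading $v_1$ sweeps cleanly to the right, while $R(v_0\otimes v_0)=v_0\otimes v_0$ and $R(v_0\otimes v_1)=(1-q^2)v_0\otimes v_1 + q\,v_1\otimes v_0$. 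By induction on $k$ this gives $\Sigma_k(v_1\otimes v_{i_2}\otimes\cdots\otimes v_{i_k}) = v_{i_2}\otimes\cdots\otimes v_{i_k}\otimes v_1$, and for a leading $v_0$ a sum of a ``no-peel'' term $q^{\#\{s:i_s=1\}}v_{i_2}\otimes\cdots\otimes v_{i_k}\otimes v_0$ together with one ``peel'' term for each $j$ with $i_j=1$, namely $(1-q^2)q^{\#\{s<j:i_s=1\}}v_{i_2}\otimes\cdots\otimes v_{i_{j-1}}\otimes v_0\otimes v_{i_{j+1}}\otimes\cdots\otimes v_{i_k}\otimes v_1$. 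Finally I would apply $\overline{s_k^k}$, which by its definition as $(-)\otimes\Lambda_k^{-1}$ is multiplication by $[\Lambda_k^{-1}]$ and hence, by Lemma \ref{inverse} and the tensor structure above, acts only on the last factor by $v_0\mapsto 2q^{-2}v_0 - q^{-4-k}v_1$ and $v_1\mapsto q^k v_0$. Applied to the two cases this collapses every trailing $v_1$ to $q^k v_0$ (producing the factor $q^k$ and the second $v_0$ in the sum) and sends the trailing $v_0$ to $2q^{-2}v_0 - q^{-4}q^{-k}v_1$, yielding exactly the two stated formulas.

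The main obstacle is the combinatorial bookkeeping in the R-matrix cascade: one must show that expanding $\Sigma_k$ on a leading $v_0$ organizes precisely into the single no-peel term plus one term per position $j$ with $i_j=1$, carrying the exponent $q^{\#\{s<j:i_s=1\}}$ and a trailing $v_1$. I expect to prove this via the recursion $\Sigma_k = (\mathrm{id}_V\otimes\Sigma_{k-1})\circ R_1$, in which the two summands of $R(v_0\otimes v_1)$ correspond respectively to peeling at the current slot (the $(1-q^2)$ term, after which the clean-sweep formula for a leading $v_1$ finishes the computation) and to passing through (the $q$ term, which continues the recursion with a $v_1$ deposited in the current slot). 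A secondary point requiring care is matching the crossing convention in the factorization of $r_k$ to the R-matrix having the property $R(v_1\otimes v_j)=v_j\otimes v_1$; once the correct braiding is fixed, the rest is the induction above together with the single application of $\overline{s_k^k}$ supplied by Lemma \ref{inverse}.
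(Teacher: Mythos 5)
Your proposal is correct and takes essentially the same route as the paper's proof: the factorization $\overline{r_k}=\overline{s_k^k}\circ\overline{t_k^{k-1}(1)}\circ\cdots\circ\overline{t_k^1(1)}$, the induction producing the ``no-peel plus one peel term per $j$ with $i_j=1$'' expansion, the action of $\overline{s_k^k}$ on the last tensor factor via Lemma \ref{inverse}, and the identity $[\Lambda_i^l]=lq^{2l-2}[\Lambda_i]-(l-1)q^{2l}[\mathcal{O}]$ for the third formula. Your explicit derivation of that last identity from the nilpotence relation $([\Lambda_i]-q^2)^2=0$ is a welcome elaboration of a step the paper leaves implicit, but the argument is otherwise the same.
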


\begin{proof}
Using Proposition \ref{tanglecat}, we compute that: \begin{align*} \overline{r_k}(v_{i_1} \otimes \cdots \otimes v_{i_k}) &= \overline{s_k^k} \circ \overline{t_k^{k-1}(1)} \circ \cdots \circ \overline{t_k^1(1)} (v_{i_1} \otimes \cdots \otimes v_{i_k}) 
\end{align*}

Using Definition \ref{def_of_fgt} and straightforward induction, one shows that
$$  \overline{t_k^{k-1}(1)} \circ \cdots \circ \overline{t_k^1(1)}(v_0 \otimes v_{i_2} \otimes \cdots \otimes v_{i_k}) = $$
$$= (1-q^2) (\sum_{2 \leq j \leq k} q^{\{\# s<j, i_s=1\}} \delta_{i_j,1} v_{i_2} \otimes \cdots \otimes v_{i_{j-1}} \otimes v_0 \otimes v_{i_{j+1}} \otimes \cdots v_{i_k} \otimes v_1) 
+ q^{\{\# s, i_s=1\}} v_{i_2} \otimes \cdots \otimes v_{i_k} \otimes v_0,$$
$$\overline{t_k^{k-1}(1)} \circ \cdots \circ \overline{t_k^1(1)}(v_1 \otimes v_{i_2} \otimes \cdots \otimes v_{i_k}) = v_{i_2} \otimes \cdots \otimes v_{i_k} \otimes v_1 
$$
The conclusion follows by noting that for any $j_s \in \{0,1\}, s=1,\dots,k$
$$\overline{s_k^k}([\Lambda_1^{j_1}]\otimes \cdots \otimes [\Lambda_k^{j_k}])
=[\Lambda_1^{j_1}]\otimes \cdots \otimes [\Lambda_k^{j_k-1}],$$
and using Definition \ref{def:K^0}.

It follows from Lemma \ref{inverse} that $[\Lambda_i^l] = l q^{2l-2}[\Lambda_i] - (l-1)q^{2l} [\mathcal{O}]$ for all $l \in \mathbb{Z}$. From Definition \ref{def:K^0}, it follows that: \begin{align*} [\Lambda_1^{l_1} \otimes \Lambda_2^{l_2} \otimes \cdots \otimes \Lambda_k^{l_k}] &= [\Lambda_1^{l_1}] \otimes [\Lambda_2^{l_2}] \otimes \cdots \otimes [\Lambda_k^{l_k}] \\ &= (l_1 q^{2l_1-2}[\Lambda_1] - (l_1-1)q^{2l_1} [\mathcal{O}]) \otimes \cdots (l_k q^{2l_k-2}[\Lambda_k] - (l_k-1)q^{2l_k} [\mathcal{O}]) \\ &= \bigotimes_{s=1}^k (l_s q^{2l_s-2}q^{-s}v_1 - (l_s-1)q^{2l_s} v_0). \end{align*} 
Note that in the last equality we used Definition \ref{def:K^0}.
\end{proof} 

\begin{definition} \label{matrix} For any $k$ denote by $A_{r,k}$ the $2^k \times 2^k$ matrix which is the matrix of the linear operator $\overline{r_k}$ in the basis $v_I$, $I \subset \{1,\cdots,k\}$, i.e. such that $A_{r,k} v_I = \overline{r_k} v_I$. Note that the matrix $A_{r,k}$ has been calculated in Lemma \ref{computation}.
\end{definition}



\begin{definition} \label{Calpha} Given $\alpha \in \text{Cross}(m,n)$, define the set $C(\alpha)$ to consist of the following pairs: $$ C(\alpha) = \{ (i, j) \; | \; 1 \leq i < j \leq m+2n; i \text{ and } j \text{ are connected in the outer circle on the diagram } \alpha \} $$ \end{definition}

\begin{definition} \label{defsgnqD} Let $T(\alpha)$ consist of all $n$-element subsets $S \subset \{ 1, \cdots, m+2n \}$ such that $|S \cap C| = 1$ for each $C \in C(\alpha)$; clearly $|T(\alpha)| = 2^n$. Let $\text{max}(C)$ be the maximal element in $C$, and for each $S \in T(\alpha)$ define: $$\text{sgn}_q(S) = (-q)^{c(S)} \text{ where } c(S) = \# \{ C : C \in C(\alpha), |C| = 2, \; S \cap C= \text{max}(C) \}.$$ 
Let $D(\alpha)$ be the set of points on the outer circle which are connected to points on the inner circle by arcs. Let $D(\alpha)=\{d_1,\dots,d_m\}$ where $d_1 < \cdots < d_m$. For a subset $J \subset D(\alpha)$ and $i \in \{1,\dots,m\}$ we define $J_i=1$ if $d_i \in J$ and $J_i=0$ if $d_i \notin J$.
\end{definition}

\begin{example} Let $\alpha \in \text{Cross}(2,2)$ be as in Example \ref{cross22}. Then we have:
\begin{align*}  T(\alpha) &= \{ (1,3), (1,6), (2,3), (2,6) \} \\ \text{sgn}(1,3)&=1, \text{sgn}(2,6)=q^2, \text{sgn}(1,6)=\text{sgn}(2,3)=-q \end{align*} \end{example}

\begin{lemma} \label{base} Let $\alpha \in \text{Cross}(m,0)$ (ie. $\alpha$ is the ``identity'' $(m,m)$-tangle). Then: $$\underline{\alpha} = (v_0 - q^{-1} v_1) \otimes \cdots \otimes (v_0 - q^{-m} v_1) = \bigotimes_{i=1}^m (v_0 - q^{-i} v_1) \in V^{\otimes m} $$ \end{lemma}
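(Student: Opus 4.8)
The plan is to reduce to a skyscraper computation and then induct on $m$ along the tower of $\mathbb{P}^1$-bundles $Y_m\to Y_{m-1}\to\cdots\to Y_1$. Since $\alpha\in\mathrm{Cross}(m,0)$ is the identity $(m,m)$-tangle, Proposition \ref{tanglecat} gives $\Psi(\alpha)\simeq\mathrm{id}$, so by Definition \ref{crossingless} the crossingless sheaf $\Psi_\alpha$ is just $\mathcal{O}_{\mathcal{B}_{m,0}}$, the skyscraper at the unique point $x_m\in Y_m$ of the Springer fibre of the regular nilpotent $z$ on $W_{m,0}=\langle e_1,\dots,e_m\rangle$, namely the flag $\langle e_1\rangle\subset\langle e_1,e_2\rangle\subset\cdots\subset\langle e_1,\dots,e_m\rangle$. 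Hence $\underline{\alpha}=[\mathcal{O}_{x_m}]$, and everything reduces to computing the class of this $\mathbb{C}^*$-fixed skyscraper in $K^0_{\mathbb{C}^*}(Y_m)\cong V^{\otimes m}$.

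As a sanity check on the target, I would first observe that under the identification of Definition \ref{def:K^0} one has, termwise, $\bigotimes_{i=1}^m(v_0-q^{-i}v_1)=\prod_{i=1}^m(1-[\Lambda_i])$ in $K^0_{\mathbb{C}^*}(Y_m)$; indeed both sides expand as $\sum_{S\subseteq\{1,\dots,m\}}(-1)^{|S|}q^{-\sum_{i\in S}i}v_S$. This is precisely the form produced by a Koszul resolution of $x_m$ as a transverse intersection of $m$ relative divisors, one for each $\mathbb{P}^1$-factor, so the lemma is equivalent to exhibiting such a resolution with the correct equivariant weights.

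To set up the induction, let $\pi:=p_m:Y_m\to Y_{m-1}$. Then $x_m$ lies in the fibre $F=\pi^{-1}(x_{m-1})$, and evaluating $z^{-1}\mathcal{V}_{m-1}/\mathcal{V}_{m-1}$ at $x_{m-1}$ identifies $F$ with $\mathbb{P}(\langle\bar e_m,\bar f_1\rangle)$, a $\mathbb{P}^1$ on which $\mathbb{C}^*$ acts with the two fixed points $x_m=[\bar e_m]$ and $[\bar f_1]$ (of weights $-2m$ and $-2$) and with $\Lambda_m|_F$ the tautological subbundle. Restricting the constant functional $f_1^\ast$ to the tautological line exhibits $x_m$ as the transverse zero locus, inside $F$, of a section of $\Lambda_m^{-1}$ twisted by a $\mathbb{C}^*$-character; since $F=\pi^{-1}(x_{m-1})$ is cut out by the regular pullback of the maximal ideal at $x_{m-1}$ and this divisor is transverse to $F$, there are no higher Tor terms and the product formula in $K$-theory gives $[\mathcal{O}_{x_m}]=\pi^\ast[\mathcal{O}_{x_{m-1}}]\cdot(1-[\mathcal{O}(-D)])$. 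Under the identification $\pi^\ast$ is the map $(-)\otimes v_0$, and $1-[\mathcal{O}(-D)]$ acts only on the $m$-th tensor factor; reducing the class of $\Lambda_m$ and its character twist via Definition \ref{def:K^0} and the identity $[\Lambda_i^{\ell}]=\ell q^{2\ell-2}[\Lambda_i]-(\ell-1)q^{2\ell}[\mathcal{O}]$ coming from Lemma \ref{inverse} sends $v_0\mapsto v_0-q^{-m}v_1$ in that factor. This yields the recursion $[\mathcal{O}_{x_m}]=[\mathcal{O}_{x_{m-1}}]\otimes(v_0-q^{-m}v_1)$, and with the base case $m=1$ (the skyscraper at a point of $Y_1\cong\mathbb{P}^1$) the formula follows by induction.

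The main obstacle is the equivariant bookkeeping in the inductive step: the Springer point $x_m=[\bar e_m]$ is the fixed point \emph{opposite} to the one cut out by the canonical global section $\bar f_1$ coming from $f_1\in\ker z$, so one must either argue fibrewise (via equivariant localization on the $\mathbb{P}^1$-fibre, comparing restrictions at $[\bar e_m]$ and $[\bar f_1]$) or descend $f_1^\ast$ along $F$, and in either case track the $\mathbb{C}^*$-weight of the resulting line bundle $\mathcal{O}(-D)$ so that the new factor is exactly $v_0-q^{-m}v_1$ rather than a weight-shifted variant. Reconciling this weight against the normalization of $q$ fixed by Definition \ref{def:K^0} and Lemma \ref{inverse} is the crux; once it is pinned down, the remaining steps are the routine termwise manipulations indicated above.
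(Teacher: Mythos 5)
Your proposal is correct and follows essentially the same route as the paper: both arguments reduce to computing the class of the skyscraper at the unique point of $\mathcal{B}_{m,0}$, induct along the tower of $\mathbb{P}^1$-bundles $p_m:Y_m\to Y_{m-1}$ with base case the sequence $0\to\Lambda_1\to\mathcal{O}\to\mathcal{O}_{y_1}\to 0$ on $Y_1\cong\mathbb{P}^1$, and in the inductive step use the relative Koszul resolution $0\to\mathcal{O}_{p_m^{-1}(y_{m-1})}\otimes\Lambda_{0,\dots,0,1}\to\mathcal{O}_{p_m^{-1}(y_{m-1})}\to j_{m*}\mathcal{O}_{y_m}\to 0$ together with the identification of $p_m^*$ with $(-)\otimes v_0$, so that the new tensor factor is $v_0-q^{-m}v_1$. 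Your ``product formula'' $[\mathcal{O}_{x_m}]=\pi^*[\mathcal{O}_{x_{m-1}}]\cdot(1-[\Lambda_m])$ is exactly the class identity that this exact sequence yields, and the weight bookkeeping you flag as the crux is handled in the paper simply by the normalization $[\Lambda_{i_1,\dots,i_m}]=q^{-\sum s i_s}v_{i_1}\otimes\cdots\otimes v_{i_m}$ of Definition \ref{def:K^0}.
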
 \begin{proof} The Springer fiber $y_m=\mathcal{B}_{m, 0}$ is a point; we will prove the statement that the class $[\mathcal{O}_{y_m}] \in K^0(Y_m)$ is $(v_0 - q^{-1}v_1) \otimes \cdots \otimes (v_0 - q^{-m}v_1)$ by induction. First note that when $m=1$, $Y_1 \simeq \mathbb{P}^1$, we have the following equivariant short exact sequence: \begin{align*} 0 \rightarrow \Lambda_{1} \rightarrow &\mathcal{O} \rightarrow i_* \mathcal{O}_{y_1} \rightarrow 0, \end{align*} 
and hence $\underline{\alpha}=[\Lambda_1^0] - [\Lambda_1^1] = v_0 - q^{-1}v_1$.

For the induction step, consider the $\mathbb{P}^1$-bundle $p_m: Y_m \rightarrow Y_{m-1}$. We have $p_m(y_m) = y_{m-1}$, and for every $m$ denote the inclusion of the point by $j_m: \{y_m\} \rightarrow Y_m$. Taking a relative version of the above exact sequence on the fiber gives: \begin{align*} 0 \rightarrow& \; \mathcal{O}_{p_m^{-1}(y_{m-1})} \otimes \Lambda_{0,\cdots,0,1} \rightarrow \mathcal{O}_{p_m^{-1}(y_{m-1})} \rightarrow j_{m*} \mathcal{O}_{y_m} \rightarrow 0 \end{align*} By the inductive hypothesis, $[\mathcal{O}_{y_{m-1}}]=(v_0 - q^{-1} v_1) \otimes \cdots \otimes (v_0 - q^{-m+1}v_1)$. Since $p_m^* \Lambda_{i_1, \cdots, i_{m-1}} = \Lambda_{i_1, \cdots, i_{m-1}, 0}$, using Definition \ref{def:K^0} we compute that: \begin{align*} [p_m^*]v_{i_1} \otimes \cdots \otimes v_{i_{m-1}} &= v_{i_1} \otimes \cdots \otimes v_{i_{m-1}} \otimes v_0 \\ [\mathcal{O}_{p_m^{-1}(y_{m-1})}] &= [p_m^* \mathcal{O}_{y_{m-1}}] = (v_0 -q^{-1}) v_1 \otimes \cdots \otimes (v_0 - q^{-m+1}v_1) \otimes v_0.  \end{align*} We also deduce that $[\mathcal{O}_{p_m^{-1}(y_{m-1})} \otimes \Lambda_{0,\cdots,0,1}] =(v_0 -q^{-1}) v_1 \otimes \cdots \otimes (v_0 - q^{-m+1}v_1) \otimes q^{-m} v_1$ (using Definition \ref{def:K^0}). This now completes the proof: $$[j_{m*} \mathcal{O}_{y_m}] = [\mathcal{O}_{p_m^{-1}(y_{m-1})}] - [\mathcal{O}_{p_m^{-1}(y_{m-1})} \otimes \Lambda_{0,\cdots,0,1}] = \bigotimes_{i=1}^m (v_0 - q^{-i} v_1).$$ \end{proof} 

\begin{definition} We say that a matching $\alpha \in \text{Cross}(m,n)$ is ``good'' if $C_1(\alpha) = \emptyset$. \end{definition}

\begin{lemma} \label{class_easy} If the matching $\alpha$ is good, then in the notation of Definition \ref{defsgnqD} we have: \begin{align*} \underline{\alpha} &= q^{-n} \sum_{\substack{I \cup J, \\ I \in T(\alpha), \\ J \subset D(\alpha)}} \text{sgn}_q(I) (-q)^{\sum i J_i} v_{I \cup J} \qquad \qquad \end{align*} \end{lemma}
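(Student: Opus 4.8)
The plan is to reduce the statement to a computation in the Grothendieck group and then induct on $n$, peeling off one cup at a time. Since $\Psi_\alpha = \Psi(\alpha)(\mathcal{O}_{\mathcal{B}_{m,0}})$ by Definition \ref{crossingless}, passing to Grothendieck groups gives $\underline{\alpha} = \overline{\alpha}([\mathcal{O}_{\mathcal{B}_{m,0}}])$, and by Lemma \ref{base} the class $[\mathcal{O}_{\mathcal{B}_{m,0}}]$ equals $\bigotimes_{i=1}^m(v_0 - q^{-i}v_1)$. Because $\alpha$ is good, it contains no arc winding around the annulus, so it is isotopic to a planar (linear) tangle; hence by Proposition \ref{tanglecat} the operator $\overline{\alpha}$ is the Reshetikhin--Turaev invariant $\psi(\alpha)$, assembled out of the cup-creation maps $\psi(g_2^1)$ of Definition \ref{def_of_fgt} together with identities, and involving no braiding factors $\psi(t_n^i(k))$. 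This reduces everything to an explicit calculation with the single map $\psi(g_2^1)(1) = q^{-1}v_1\otimes v_0 - v_0\otimes v_1$.

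First I would set up the induction on $n$, with base case $n=0$ supplied by Lemma \ref{base}: there $T(\alpha)=\{\varnothing\}$, $D(\alpha)=\{1,\dots,m\}$ with $d_i=i$, and the through-strand weights reproduce $\bigotimes_{i=1}^m(v_0-q^{-i}v_1)$. For the inductive step I would choose an \emph{innermost} cup $C^\ast=(c,c+1)\in C(\alpha)$: since $\alpha$ is crossingless, loopless and non-winding, such a cup connects two \emph{adjacent} outer points. Deleting $C^\ast$ and relabelling produces a good matching $\alpha'\in\text{Cross}(m,n-1)$, and $\alpha$ is recovered from $\alpha'$ by adjoining the cup creation $g_{m+2n}^c$. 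Consequently $\overline{\alpha} = (\mathrm{id}^{\otimes c-1}\otimes\psi(g_2^1)\otimes\mathrm{id}^{\otimes m+2n-c-1})\circ\overline{\alpha'}$, so that $\underline{\alpha} = (\mathrm{id}\otimes\psi(g_2^1)\otimes\mathrm{id})(\underline{\alpha'})$.

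The heart of the argument is then the bookkeeping. Writing $\psi(g_2^1)(1) = q^{-1}\bigl(v_1\otimes v_0 - q\,v_0\otimes v_1\bigr)$, inserting this at positions $c,c+1$ does two things to each term of the inductive expression for $\underline{\alpha'}$: it multiplies by $q^{-1}$ (accounting for the passage from $q^{-(n-1)}$ to $q^{-n}$), and it splits the term into two, according to whether $v_1$ is placed at $c$ or at $c+1$. These two choices are exactly the two extensions of a given $I'\in T(\alpha')$ to an element $I\in T(\alpha)$, and the relative factor $-q$ attached to the $c+1$ (i.e.\ $\max(C^\ast)$) term matches the recursion $\text{sgn}_q(I)=\text{sgn}_q(I')\cdot(-q)^{[\,c+1\in I\,]}$ implicit in Definition \ref{defsgnqD}. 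The crucial point is that $D(\alpha)$ coincides with $D(\alpha')$ as an \emph{indexed} set (the relative order of the through-points, and hence the index $i$ of each $d_i$, is unaffected by the insertion), so the factor $(-q)^{\sum_i iJ_i}$ is inherited verbatim. Summing over $I'\in T(\alpha')$, over $J\subset D(\alpha)$, and over the two cup choices reorganizes into the sum over $I\in T(\alpha)$ and $J\subset D(\alpha)$ claimed in the statement.

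I expect the main obstacle to be precisely this grading-and-sign bookkeeping, and in particular justifying the insertion-invariance of the weights. This invariance is what forces the use of an innermost, \emph{adjacent} cup: only then does $\psi(g_2^1)$ act purely locally, inserting two new tensor factors without transporting any through-strand past a cup, so that no braiding operators $\psi(t_n^i(k))$ --- which would introduce the corrections recorded in Lemma \ref{computation} --- ever appear. I would therefore spend most care on (i) verifying that goodness is preserved under deletion of an innermost cup, so the induction stays within good matchings, and (ii) checking that $\max(C)$ for every other cup, together with all indices in $D(\alpha)$, is stable under the relabelling, so that both $\text{sgn}_q$ and the $J$-weight transform exactly as required.
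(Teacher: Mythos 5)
Your proposal is correct and follows essentially the same route as the paper: induction on $n$ with Lemma \ref{base} as the base case, factoring $\alpha = g_{m+2n}^{c}\,\alpha'$ by peeling off an adjacent (innermost) cup of the good matching, and then matching the two terms of $\psi(g_2^1)(1) = q^{-1}(v_1\otimes v_0 - q\,v_0\otimes v_1)$ with the overall $q^{-1}$ and the $(-q)^{[\,c+1\in I\,]}$ factor in $\mathrm{sgn}_q$. The paper's own proof is terser and leaves the relabelling/bookkeeping implicit, which you spell out; no substantive difference.
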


\begin{proof}[Proof of Lemma \ref{class_easy}] Proof by induction. For the induction step, note that if $\alpha \in \text{Cross}(m,n)$ is a good matching, we can pick $i$ such that $1 < i <m+2n$ and $(i, i+1) \in C(\alpha)$, express $\alpha = g_{m+2n}^i \beta$, for some $\beta \in \text{Cross}(m,n-1)$. Then we can complete the proof using notation from Definition \ref{defsgnqD} and Proposition \ref{tanglecat}: \begin{align*} \underline{\alpha} &= \psi(g_{m+2n}^i) \underline{\beta} \\ &= q^{-n+1} \sum_{\substack{I \cup J, \\ I \in T(\beta), \\ J \subset D(\beta)}}  (-q)^{\sum i J_i} \text{sgn}_q(I) \psi(g_{m+2n}^i) v_{I \cup J}  \\ &= q^{-n} \sum_{\substack{I \cup J, \\ I \in T(\alpha), \\ J \subset D(\alpha)}} (-q)^{\sum i J_i} \text{sgn}_q(I) v_{I \cup J} \end{align*} \end{proof}  

\begin{definition} \label{vector} For a good matching $\alpha \in {\rm Cross}(m,n)$, denote by $b_{\alpha} = \sum b_{\alpha, I} v_I$ the vector in the vector space spanned by $v_I$, $I \in\{1,\cdots,m+2n\}$ such that $b_{\alpha}=\underline{\alpha}$. Note that for any good matching $\alpha$, the vector $b_{\alpha}$ was calculated in Lemma \ref{class_easy}.
\end{definition}

Let $\alpha = r_{m+2n}^k \beta$ where $\beta$ is a good matching and $k \geq 0$ (to see the existence of such an expression, simply consider the arc $(i,j) \in C(\alpha)$ with maximal distance between its endpoints; then $k$ corresponds to one of the endpoints). Recalling Definitions \ref{matrix} and \ref{vector}, we have:

\begin{theorem} \label{maintheorem} For a crossingless matching $\alpha \in {\rm Cross}(m,n)$, the class in equivariant K-theory of $Y_{m+2n}$ of the sheaf $\Psi_{\alpha}$ is equal to $A_{r,m+2n}^k b_{\beta}$. 
\end{theorem}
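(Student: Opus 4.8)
The plan is to reduce the statement to the functoriality of $\Psi$ together with the computations already carried out in Lemmas \ref{computation} and \ref{class_easy}. The essential observation is that everything appearing in the conclusion is the image, in the Grothendieck group, of a genuine factorization of functors coming from the tangle factorization $\alpha = r_{m+2n}^k \beta$ recorded in the paragraph preceding the theorem. Thus the theorem itself should require no new geometric input; the substantive work has all been front-loaded into the lemmas.

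First I would take the factorization $\alpha = r_{m+2n}^k \beta$, in which $\beta$ is a good matching and $k \geq 0$. Here $\beta$ is viewed as an $(m, m+2n)$-tangle while each factor $r_{m+2n}$ is an $(m+2n, m+2n)$-tangle, so that the composite is again an $(m, m+2n)$-tangle, matching the type of $\alpha$. Applying Proposition \ref{tanglecat} repeatedly gives an isomorphism of functors $\Psi(\alpha) \simeq \Psi(r_{m+2n})^{\circ k} \circ \Psi(\beta) : \mathcal{D}_m \to \mathcal{D}_{m+2n}$, where by $\Psi(r_{m+2n})^{\circ k}$ I mean the $k$-fold composite.

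Next I would evaluate both sides on the skyscraper sheaf $\mathcal{O}_{\mathcal{B}_{m,0}}$. By Definition \ref{crossingless} the left-hand side is exactly $\Psi_\alpha$, while the right-hand side is $\Psi(r_{m+2n})^{\circ k}$ applied to $\Psi(\beta)(\mathcal{O}_{\mathcal{B}_{m,0}}) = \Psi_\beta$. Passing to classes in $K^0_{\mathbb{C}^*}(Y_{m+2n})$ and using that the induced-class map is functorial (the map induced by a composite of functors is the composite of the induced maps), I obtain $\underline{\alpha} = \overline{r_{m+2n}}^{\,k}(\underline{\beta})$. By Definition \ref{matrix} the operator $\overline{r_{m+2n}}$ is represented in the basis $\{v_I\}$ by the matrix $A_{r,m+2n}$, so $\overline{r_{m+2n}}^{\,k}$ is represented by $A_{r,m+2n}^k$; and since $\beta$ is good, Lemma \ref{class_easy} together with Definition \ref{vector} identifies $\underline{\beta}$ with the vector $b_\beta$. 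Combining these gives $\underline{\alpha} = A_{r,m+2n}^k b_\beta$, which is the assertion.

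The genuinely substantive points are inputs assumed here rather than reproved: the functoriality of $\Psi$ on affine tangles (Proposition \ref{tanglecat}), the explicit form of $A_{r,m+2n}$ (Lemma \ref{computation}), and the closed formula for $b_\beta$ on good matchings (Lemma \ref{class_easy}). Consequently the only thing requiring care in the theorem's own proof is bookkeeping: checking that the factorization $\alpha = r_{m+2n}^k \beta$ genuinely exists with $\beta$ good — which is where the arc $(i,j) \in C(\alpha)$ of maximal endpoint-distance is used to locate $k$ — and tracking the composition order so that the powers of $r_{m+2n}$ act on the correct side of $\underline{\beta}$. I expect this bookkeeping to be the principal (and essentially only) obstacle, since once the tangle factorization is in hand the remainder is a formal consequence of functoriality and of passing to the Grothendieck group.
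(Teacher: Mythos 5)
Your proposal is correct and follows exactly the route the paper intends: the paper states the theorem without writing out a proof, having already set up the factorization $\alpha = r_{m+2n}^k\beta$, the matrix $A_{r,m+2n}$ (Definition \ref{matrix} via Lemma \ref{computation}), and the vector $b_\beta$ (Definition \ref{vector} via Lemma \ref{class_easy}), so that the result is precisely the formal consequence of Proposition \ref{tanglecat} and passage to the Grothendieck group that you describe. Your write-up simply makes explicit the bookkeeping the paper leaves implicit, and it is accurate.
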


\bibliographystyle{plain}

\end{document}